\newcommand{\va}{{\varrho}}
\newcommand{\prs}{\langle\;,\;\rangle}
\newcommand{\too}{\longrightarrow}
\newcommand{\om}{\omega}
\newcommand{\esp}{\quad\mbox{and}\quad}
\def\br{[\;,\;]}
\newcommand{\G}{\mathfrak{g}}
\newcommand{\g}{\mathfrak{g}}
\newcommand{\h}{{\mathfrak{h}}}
\newcommand{\ad}{{\mathrm{ad}}}
\newcommand{\tr}{{\mathrm{tr}}}
\newcommand{\B}{{\cal B}}
\newcommand{\M}{{\cal M}}
\newcommand{\D}{{\cal D}}
\newcommand{\p}{{\mathfrak{p} }}
\newcommand{\Om}{\Omega}
\newcommand{\na}{\nabla}
\newcommand{\al}{\alpha}
\newcommand{\be}{\beta}
\newcommand{\ga}{\gamma}
\newcommand{\Ga}{\Gamma}
\newcommand{\e}{\epsilon}
\newcommand{\la}{\lambda}
\newcommand{\de}{\delta}
\newcommand{\inj}{\hookrightarrow}
\newtheorem{defn}{Definition}[section]
\newtheorem{thm}{Theorem}[section]
\newtheorem{prop}{Proposition}[section]
\newtheorem{exem}{Example}
\newtheorem{problem}{Problem}
\font\bb=msbm10
\def\B{\hbox{\bb B}}
\def\R{\hbox{\bb R}}
\begin{document}

\begin{frontmatter}

%% Title, authors and addresses

%% use the tnoteref command within \title for footnotes;
%% use the tnotetext command for the associated footnote;
%% use the fnref command within \author or \address for footnotes;
%% use the fntext command for the associated footnote;
%% use the corref command within \author for corresponding author footnotes;
%% use the cortext command for the associated footnote;
%% use the ead command for the email address,
%% and the form \ead[url] for the home page:
%%
 %\title{Left invariant para-K\"ahler and hyper-para-K\"ahler structures on Lie groups\tnoteref{label1}}
%% \tnotetext[label1]{}
 %\author{\corref{cor1}\fnref{label2}}
 
%% \ead{email address}
%% \ead[url]{home page}
%% \fntext[label2]{}
%% \cortext[cor1]{}
%% \address{Address\fnref{label3}}
 %\fntext[label3]{This research was conducted within the framework of Action concert\'ee CNRST-CNRS Project SPM04/13.}

\title{ On Riemann-Poisson  Lie groups}

%% use optional labels to link authors explicitly to addresses:
\author[label1]{Brahim Alioune  }
\address[label1]{Universit\'e de Nouakchott\\
	Facult\'e des sciences et techniques\\
	e-mail: pacha.ali86@gmail.com}
 \author[label2]{Mohamed Boucetta}
 \address[label2]{Universit\'e Cadi-Ayyad\\
 	Facult\'e des sciences et techniques\\
 	BP 549 Marrakech Maroc\\e-mail: m.boucetta@uca.ac.ma}
\author[label3]{Ahmed Sid'Ahmed Lessiad}
\address[label3]{Universit\'e de Nouakchott\\
	Facult\'e des sciences et techniques\\
	e-mail: lessiadahmed@gmail.com}

%\author{}

%\address{}

\begin{abstract}  A Riemann-Poisson Lie group is a Lie group endowed with a  left invariant Riemannian metric and a left invariant Poisson tensor which are compatible in the sense introduced in \cite{bou1}. We study these Lie groups and we give a characterization of their  Lie algebras. We give also a way of building these Lie algebras and we give the list of such Lie algebras up to dimension 5.
\end{abstract}
\end{frontmatter}

\section{Introduction}\label{section1}

In this paper, we study Lie groups endowed with a left invariant Riemannian metric and a left invariant Poisson tensor satisfying a compatibility condition to be defined below. They constitute a subclass of the class of {\it Riemann-Poisson manifolds} introduced and studied by the second author (see \cite{ bou3, bou4, bou1, bou2}).

Let $(M,\pi,\prs)$ be smooth manifold endowed with a Poisson tensor $\pi$ and a Riemannian metric $\prs$. We denote by $\prs^*$ the Euclidean product on $T^*M$ naturally associated to $\prs$. The Poisson tensor defines a Lie algebroid structure on $T^*M$ where the anchor map is the contraction $\#_\pi : T^*M \longrightarrow TM$ 
given by $\prec\be,\#_\pi(\al)\succ=\pi(\al,\be)$ and the Lie bracket on $\Om^1(M)$ is the Koszul bracket given by
\begin{equation} \label{k}[\alpha,\beta]_\pi = \mathcal{L}_{\#_\pi(\alpha)}\beta - \mathcal{L}_{\#_\pi(\beta)}\alpha -d\pi(\alpha,\beta),\quad\al,\be\in\Om^1(M). \end{equation}
This Lie algebroid structure and the metric $\prs^*$  define a contravariant connection $\D:\Om^1(M)\times\Om^1(M)\too \Om^1(M)$  by Koszul formula
\begin{eqnarray}\label{lv} 2\langle\D_\al\be,\ga\rangle^*&=&\#_\pi(\al).\langle\be
	,\ga\rangle^*+\#_\pi(\be).\langle\al ,\ga\rangle^*-\#_\pi(\ga).\langle \al,\be\rangle^*\\ \nonumber
	&&+\langle[\al,\be]_{\pi},\ga \rangle^*+\langle[\ga,\al]_{\pi}, \be\rangle^*+\langle[\ga,\be]_{\pi}, \al\rangle^*,\quad\al,\be,\ga\in \Om^1(M).
\end{eqnarray}
This is the unique  torsionless contravariant connection which is metric, i.e., for any $\al,\be,\ga\in \Om^1(M)$, 
\[ \D_\alpha\beta - \D_\beta\alpha = [\alpha,\beta]_\pi\esp
\#_\pi(\alpha).\langle\beta,\gamma\rangle^* = \langle\D_\alpha\beta,\gamma\rangle^* + \langle\beta,\D_\alpha\gamma\rangle^*.  \]
The notion of contravariant connection was introduced by Vaisman in \cite{vaisman} and studied in more details by Fernandes in the context of Lie algebroids \cite{fern}. The connection $\D$ defined above is called {\it contravariant Levi-Civita connection} associated to the couple $(\pi,\prs)$ and it appeared first in \cite{bou3}.

 The triple $(M,\pi,\prs)$ is called a {\it Riemannian-Poisson manifold} if $\D\pi=0$, i.e., for any $\alpha, \beta, \gamma \in \Omega^1(M),$
	\begin{equation}\label{rp}\D\pi(\alpha,\beta,\gamma) := \#_\pi(\alpha).\pi(\beta,\gamma) - \pi(\D_\alpha\beta,\gamma) + \pi(\beta,\D_\alpha\gamma)=0.\end{equation}
This notion  was introduced by the second author in \cite{bou3}. Riemann-Poisson manifolds turned out to have interesting geometric properties (see\cite{ bou3, bou4, bou1, bou2}).	Let's mention some of them.\begin{enumerate}
	\item 
The condition of compatibility \eqref{rp} is weaker than the condition $\na\pi=0$ where $\na$ is the Levi-Civita connection of $\prs$. Indeed, the condition \eqref{rp}  allows the Poisson tensor to have a variable rank. For instance, linear Poisson structures which are Riemann-Poisson exist and were characterized in \cite{bou2}. Furthermore, 
 let $(M,\prs)$ be a Riemannian manifold and $(X_1,\ldots,X_r)$ a family of commuting Killing vector fields. Put
	\[ \pi=\sum_{i,j}X_i\wedge X_j. \]Then $(M,\pi,\prs)$ is a Riemann-Poisson manifold. This  example illustrates also the weakness of the condition \eqref{rp} and, more importantly, it is the local model of the geometry of noncommutative deformations studied by Hawkins (see  \cite[Theorem 6.6]{Hawkins}).

\item
 Riemann-Poisson manifolds  can be thought of as a generalization of K\"ahler manifolds. Indeed, let $(M,\pi,\prs)$ be a Poisson manifold endowed with a Riemannian metric such that $\pi$ is invertible. Denote by $\om$ the symplectic form inverse of $\pi$. Then $(M,\pi,\prs)$ is Riemann-Poisson manifold if and only if $\na\om=0$ where $\na$ is the Levi-Civita connection of $\prs$. In this case, if we define $A:TM\too TM$ by $\om(u,v)=\langle Au,v\rangle$ then $-A^2$ is symmetric definite positive and hence there exists a unique $Q:TM\too TM$ symmetric definite positive such that $Q^2=-A^2$. It follows that $J=AQ^{-1}$ satisfies $J^2=-\mathrm{Id}_{TM}$, skew-symmetric with respect $\prs$ and $\na J=0$. Hence $(M,J,\prs)$ is a K\"ahler manifold and its K\"ahler form $\om_J(u,v)=\langle Ju,v\rangle$ is related to $\om$ by the following formula: \begin{equation}\label{sy}
\om(u,v)=-\om_J\left(\sqrt{-A^2}u,v\right),\quad u,v\in TM.\end{equation}

Having this construction in mind, we will call in this paper a K\"ahler manifold a triple $(M,\prs,\om)$ where $\prs$ is a Riemannian metric and $\om$ is a nondegenerate 2-form $\om$ such that $\na\om=0$ where $\na$ is the Levi-Civita connection of $\prs$. 

\item The symplectic foliation of a Riemann-Poisson manifold when $\pi$ has a constant rank has an important property namely it is both a Riemannian foliation and a K\"ahler foliation. 

Recall that a Riemannian foliation is a foliated manifold $(M,\mathcal{F})$ with a Riemannian metric $\prs$ such that the orthogonal distribution $T^\perp\mathcal{F}$ is totally geodesic. 

 K\"ahler foliations are a generalization of K\"ahler manifolds (see \cite{deninger}) and, as for the notion of K\"ahler manifold, we call in this paper a K\"ahler foliation a foliated manifold $(M,\mathcal{F})$ endowed with a leafwise metric $\prs_{\mathcal{F}}\in\Ga(\otimes^2T^*\mathcal{F})$ and a nondegenerate leafwise differential 2-form $\om_{\mathcal{F}}\in\Ga(\otimes^2T^*\mathcal{F})$ such  any leaf with the restrictions of $\prs_{\mathcal{F}}$ and $\om_{\mathcal{F}}$ is a K\"ahler manifold.

\begin{thm}[\cite{bou1}]\label{thfo} Let $(M,\prs,\pi)$ be a Riemann-Poisson manifold with $\pi$ of constant rank. Then its symplectic foliation is both a Riemannian and a K\"ahler foliation.
	
\end{thm}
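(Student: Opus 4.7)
The plan is to split the conclusion into its two parts and reduce each to an intrinsic statement on a single leaf, using the characterization of invertible Riemann-Poisson manifolds (item (2) of the introduction) as the workhorse for the K\"ahler claim. To begin, the constant-rank hypothesis makes $T\mathcal{F}:=\#_\pi(T^*M)$ a smooth subbundle, so $TM=T\mathcal{F}\oplus T^\perp\mathcal{F}$; under the musical isomorphism $\flat:TM\to T^*M$ associated to $\prs$, the orthogonal $T^\perp\mathcal{F}$ corresponds to $(T\mathcal{F})^\circ=\ker\#_\pi$, and the dual metric $\prs^*$ restricted to $\ker\#_\pi$ is the natural transverse metric.

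For the K\"ahler-foliation conclusion, fix a leaf $\iota:L\hookrightarrow M$, let $\pi_L:=\iota^*\pi$ be the induced non-degenerate Poisson tensor on $L$, $\omega_L$ its symplectic inverse, and $\prs_L:=\iota^*\prs$. By item (2) of the introduction it suffices to show that $(L,\pi_L,\prs_L)$ is Riemann-Poisson, because then $\nabla^L\omega_L=0$ for the Levi-Civita $\nabla^L$ of $\prs_L$, giving a K\"ahler structure on every leaf. The key technical identity is $\iota^*(\mathcal{D}_\alpha\beta)=\mathcal{D}^L_{\iota^*\alpha}(\iota^*\beta)$, where $\mathcal{D}^L$ is the contravariant Levi-Civita of $(\pi_L,\prs_L)$. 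One proves this by applying the Koszul formula \eqref{lv} on each side and verifying that every one of the six terms depends on $\alpha,\beta,\gamma$ only through their restrictions to $L$; this is a consequence of $\#_\pi$ annihilating $(T\mathcal{F})^\circ$ and the Koszul bracket \eqref{k} being tensorial modulo $(T\mathcal{F})^\circ$. Substituting $\mathcal{D}\pi=0$ then yields $\mathcal{D}^L\pi_L=0$.

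For the Riemannian foliation part, the goal is to show $T^\perp\mathcal{F}$ is totally geodesic, i.e., $\langle\nabla_X Y,\#_\pi\gamma\rangle=0$ for all $X,Y\in\Gamma(T^\perp\mathcal{F})$ and all $\gamma\in\Omega^1(M)$. Writing $X=\sharp\alpha$, $Y=\sharp\beta$ with $\alpha,\beta\in\ker\#_\pi$ and using $\langle Y,\#_\pi\gamma\rangle=0$ to rewrite the left-hand side as $-\langle Y,\nabla_X\#_\pi\gamma\rangle$, I would expand $\nabla_X\#_\pi\gamma$ via the ordinary Koszul formula and compare term by term with the contravariant formula \eqref{lv} applied to $\mathcal{D}_\alpha\gamma$. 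The compatibility $\mathcal{D}\pi=0$ can be rephrased, using that $\mathcal{D}$ is torsion-free and metric, as a controlled discrepancy between $\#_\pi(\mathcal{D}_\alpha\gamma)$ and $\nabla_{\#_\pi\alpha}\#_\pi\gamma$; the hypotheses $\#_\pi\alpha=\#_\pi\beta=0$ then collapse this discrepancy and yield the required vanishing.

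The main obstacle is this last step. The K\"ahler half is essentially formal once the restriction identity for $\mathcal{D}$ is established, but the Riemannian-foliation half requires moving between the contravariant world $(T^*M,\#_\pi,\mathcal{D})$ and the covariant world $(TM,\sharp,\nabla)$; the two agree only when $\pi$ is invertible, so one must patiently combine the two Koszul formulas and exploit $\mathcal{D}\pi=0$ as a pointwise algebraic constraint on $\#_\pi\circ\mathcal{D}$ to control the mismatch between $\#_\pi$ and $\sharp$.
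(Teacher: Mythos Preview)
This paper does not actually prove Theorem~\ref{thfo}: it is quoted from \cite{bou1} and stated in the introduction without argument, so there is no in-paper proof to compare against. That said, your proposal is essentially correct, and the paper's Proposition~\ref{pr0} (for Lie algebras) points to a cleaner route for the part you flagged as the main obstacle.

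Your K\"ahler half is sound. Once you note that $(T\mathcal F)^\circ=\ker\#_\pi$ and that the anchor, the Koszul bracket and the dual metric all descend to the quotient $T^*M|_L/\ker\#_\pi\simeq T^*L$, the restriction identity $\iota^*(\D_\al\be)=\D^L_{\iota^*\al}(\iota^*\be)$ follows term by term from \eqref{lv}, and then $\D^L\pi_L=0$ is immediate from $\D\pi=0$.

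For the Riemannian half you do not need to compare the two Koszul formulas directly. The cleaner argument is the manifold analogue of the step ``$(i)+(ii)\Rightarrow(c_2)$'' in the proof of Proposition~\ref{pr0}: for sections $\al,\be$ of $\ker\#_\pi$, on the one hand \eqref{lv} with a test form $\ga\in\Ga(\ker\#_\pi)$ has all six terms vanishing, so $\D_\al\be\in(\ker\#_\pi)^\perp$; on the other hand \eqref{rp} with $\#_\pi\al=\#_\pi\be=0$ gives $\pi(\D_\al\be,\ga)=0$ for every $\ga$, so $\D_\al\be\in\ker\#_\pi$. Hence $\D_\al\be=0$. Now write $X=\sharp\al$, $Y=\sharp\be\in T^\perp\mathcal F$ and $Z=\#_\pi\ga\in T\mathcal F$; the surviving terms of \eqref{lv} give
\[
0=2\langle\D_\al\be,\ga\rangle^*=-Z.\langle\al,\be\rangle^*+\langle\mathcal L_Z\al,\be\rangle^*+\langle\al,\mathcal L_Z\be\rangle^*,
\]
which after the substitution $\al=\flat X$, $\be=\flat Y$ is exactly $(\mathcal L_Zg)(X,Y)=0$. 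This is the bundle-like condition, equivalent to $T^\perp\mathcal F$ being totally geodesic. So your obstacle dissolves once you aim for $\D_\al\be=0$ on $\ker\#_\pi$ rather than for a direct comparison of $\na$ and $\D$.
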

\end{enumerate}

Having in mind these properties particularly Theorem \ref{thfo}, it will be interesting to find large classes of examples of Riemann-Poisson manifolds. This paper will describe the rich collection of examples which are obtained by providing an arbitrary Lie group $G$ with a Riemannian metric $\prs$ and a Poisson tensor $\pi$ invariant under left translations and such that $(G,\prs,\pi)$ is Riemann-Poisson. We call $(G,\prs,\pi)$ a {\it Riemann-Poisson Lie group}.
This class of examples can be enlarged substantially, with no extra work, as follows. If $(G,\prs,\pi)$ is a Riemann-Poisson Lie group and $\Ga$ is any discrete subgroup of $G$ then $\Ga\backslash G$ carries naturally a structure of Riemann-Poisson manifold. 

The paper is organized as follows. In Section \ref{section2}, we give the material needed in the paper and we describe the infinitesimal counterpart of Riemann-Poisson Lie groups, namely, Riemann-Poisson Lie algebras. In Section \ref{section3}, we prove our main result  which gives an useful description of Riemann-Poisson Lie algebras (see Theorem \ref{main1}). We use this theorem in Section \ref{section4} to derive  a	 method for building Riemann-Poisson Lie algebras. We explicit this method by giving the list of Riemann-Poisson Lie algebras up to dimension 5.

\section{Riemann-Poisson Lie groups and their infinitesimal characterization}\label{section2}

Let $G$ be a Lie group and $(\G=T_eG,\br)$ its Lie algebra.
\begin{enumerate}
	\item A left invariant Poisson tensor $\pi$ on $G$ is entirely determined by 
	\[ \pi(\al,\be)(a)=r(\mathrm{L}_{a}^*\al,\mathrm{L}_{a}^*\beta), \]where $a\in G,\al,\be\in T_{a}^*G$, $\mathrm{L}_a$ is the left multiplication by $a$ and $r\in\wedge^2\G$ satisfies the classical Yang-Baxter equation 
	\begin{equation}\label{yb1} [r,r]=0, \end{equation}where $[r,r]\in\wedge^3\G$ is given by
	\begin{equation}\label{yb}
	[r,r](\al,\be,\ga):=\prec\al,[r_\#(\be),r_\#(\ga)]\succ+
	\prec\be,[r_\#(\ga),r_\#(\al)]\succ+\prec\ga,[r_\#(\al),r_\#(\be)]\succ,\quad\al,\be,\ga\in\G^*,
	\end{equation}and $r_\#:\G^*\too\G$ is the contraction associated to $r$. In this case, the Koszul bracket \eqref{k} when restricted to left invariant differential 1-forms induces a Lie bracket on $\G^*$ given by
	\begin{equation}\label{br} [\alpha,\beta]_r = \ad^*_{r_\#(\al)}\be - \ad^*_{r_\#(\be)}\al,\quad\al,\be\in\G^*, \end{equation}where $\prec\ad^*_u\al,v\succ=-\prec\al,[u,v]\succ$. Moreover, $r_\#$ becomes a morphism of Lie algebras, i.e.,
	\begin{equation}\label{mo}
	r_\#([\al,\be]_r)=[r_\#(\al),r_\#(\be)],\quad\al,\be\in\G^*.
	\end{equation}
	\item A let invariant Riemannian metric $\prs$ on $G$ is entirely determined by
	\[ \langle u,v\rangle(a)=\rho(T_a\mathrm{L}_{a^{-1}}u,T_a\mathrm{L}_{a^{-1}}v), \]where $a\in G,u,v\in T_aG$ and $\rho$ is a scalar product on $\G$. The Levi-Civita connection of $\prs$ is left invariant and induces a product $A:\G\times\G\too\G$ given by
	\begin{equation}\label{lp}
	2\va(A_uv,w)=\va([u,v],w)+\va([w,u],v)+\va([w,v],u),\quad u,v,w\in\G.
	\end{equation} 
	It is the unique product on $\G$ satisfying
	\[ A_uv-A_vu=[u,v]\esp \va(A_uv,w)+\va(v,A_uw)=0, \]for any $u,v,w\in\G$. We call $A$ the {\it Levi-Civita product} associated to $(\G,\br,\rho)$.
	
	\item Let $(G,\prs,\Om)$ be a Lie group endowed with a left invariant Riemannian metric and a nondegenerate left invariant 2-form. Then $(G,\prs,\Om)$ is a K\"ahler manifold if and only if, for any $u,v,w\in\G$,
	\begin{equation}\label{ka}
	\om(A_uv,w)+\om(u,A_uv)=0,
	\end{equation}where $\om=\Om(e)$, $\rho=\prs(e)$ and $A$ is the Levi-Civita product of $(\G,\br,\rho)$. In this case we call $(\G,\br,\rho,\om)$ a K\"ahler Lie algebra. 
	
\end{enumerate}

As all the left invariant structures on Lie groups, Riemann-Poison Lie groups can be characterized at the level of their Lie algebras.

\begin{prop}\label{pr1} Let $(G,\pi,\prs)$ be a Lie group endowed with a left invariant bivector field and a left invariant metric and $(\G,\br)$ its Lie algebra. Put $r=\pi(e)\in\wedge^2\G$, $\va=\prs_e$ and $\va^*$ the associated Euclidean product on $\G^*$. Then $(G,\pi,\prs)$ is a Riemann-Poisson Lie group if and only if
	\begin{enumerate}
		\item[$(i)$] $[r,r]=0$,
		\item[$(ii)$] for any $\al,\be,\ga\in\G^*$, $r(A_\al\be,\ga)+r(\be,A_\al\ga)=0$,
	\end{enumerate}where $A$ is the Levi-Civita product associated to $(\G^*,\br_r,\va^*)$.
	
\end{prop}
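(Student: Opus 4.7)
The plan is to exploit left-invariance to reduce the pointwise-tensorial condition $\D\pi=0$ on $G$ to an algebraic identity at the identity $e$. Since $\D\pi$ is tensorial in its three arguments, it suffices to evaluate it on left-invariant extensions of arbitrary $\al,\be,\ga\in\G^*$; write $\al^L$ for the left-invariant $1$-form with $\al^L(e)=\al$. The standard correspondence between left-invariant Poisson tensors on $G$ and classical $r$-matrices on $\G$ gives that $\pi$ is Poisson if and only if $[r,r]=0$, yielding $(i)$; once this holds, $(\G^*,[\,,\,]_r)$ is a Lie algebra and $r_\#$ is a Lie algebra morphism by \eqref{mo}.

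The core of the argument is a pair of left-invariance reductions. First, the Koszul bracket \eqref{k} of two left-invariant $1$-forms is again left-invariant, with
\[ [\al^L,\be^L]_\pi = ([\al,\be]_r)^L, \]
which follows by evaluating the Lie derivatives and the exterior derivative in \eqref{k} at $e$, using that $\pi(\al^L,\be^L)$ is the constant function $r(\al,\be)$ and that $\#_\pi(\al^L)$ is left-invariant. Second, and this is the main computational step, the contravariant Levi-Civita connection $\D$ preserves left-invariant $1$-forms with
\[ \D_{\al^L}\be^L = (A_\al\be)^L, \]
where $A$ is the Levi-Civita product of $(\G^*,[\,,\,]_r,\va^*)$. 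Indeed, because $\#_\pi(\al^L)$ is left-invariant and $\langle\be^L,\ga^L\rangle^*$ is the constant $\va^*(\be,\ga)$, the three function-derivative terms in the Koszul formula \eqref{lv} vanish identically, and the three remaining bracket terms reproduce exactly \eqref{lp} defining $A$.

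With these reductions, evaluate \eqref{rp} at $e$ on $\al^L,\be^L,\ga^L$: the term $\#_\pi(\al^L).\pi(\be^L,\ga^L)$ vanishes because $\pi(\be^L,\ga^L)$ is the constant $r(\be,\ga)$, and the two remaining terms become $r(A_\al\be,\ga)+r(\be,A_\al\ga)$. Thus $\D\pi=0$ at $e$ is exactly condition $(ii)$, and left-translation propagates the identity to all of $G$.

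The main obstacle is the identification $\D_{\al^L}\be^L=(A_\al\be)^L$: one must match the six terms of \eqref{lv} on left-invariant inputs against those of \eqref{lp} on $(\G^*,[\,,\,]_r,\va^*)$, checking carefully that left-invariance of $\#_\pi$ and $\langle\,,\,\rangle^*$ eliminates all the function-derivative contributions. Once this bookkeeping is done, the rest is immediate.
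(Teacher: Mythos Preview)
Your proof is correct and follows essentially the same approach as the paper's: reduce everything to the identity via left-invariance, verify that $[\al^L,\be^L]_\pi=([\al,\be]_r)^L$ and $\D_{\al^L}\be^L=(A_\al\be)^L$ by matching the Koszul formulas \eqref{k}, \eqref{lv} against \eqref{br}, \eqref{lp}, and then read off conditions $(i)$ and $(ii)$ from $[\pi,\pi]_S=0$ and $\D\pi=0$. If anything, you spell out more explicitly than the paper why the function-derivative terms in \eqref{lv} drop out (constancy of $\langle\be^L,\ga^L\rangle^*$), which is exactly the bookkeeping the paper leaves implicit.
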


\begin{proof} For any $u\in\G$ 	and $\al\in\G^*$, we denote by $u^\ell$ and $\al^\ell$, respectively, the left invariant vector field and the left invariant differential 1-form on $G$ given by
	\[ u^\ell(a)=T_e\mathrm{L}_a(u)\esp \al^\ell(a)=T_a^*\mathrm{L}_{a^{-1}}(\al),\quad a\in G,\; \mathrm{L}_a(b)=ab. \]
	Since $\pi$ and $\prs$ are left invariant, one can see easily from \eqref{k} and \eqref{lv} that we have, for any $\al,\be,\ga\in\G^*$,
	\[ \begin{cases} [\pi,\pi]_S(\al^\ell,\be^\ell,\ga^\ell)=[r,r](\al,\be,\ga),\;\#_\pi(\al^\ell)=(r_\#(\al))^\ell,\;\mathcal{L}_{\#_\pi(\al^\ell)}\be^\ell=
	\left(\ad_{r_\#(\al)}^*\be\right)^\ell,\\
	[\al^\ell,\be^\ell]_\pi=\left([\al,\be]_r\right)^\ell,\;\D_{\al^\ell}\be^\ell=\left(A_\al\be\right)^\ell.
	\end{cases} \] 
	The proposition follows from these formulas,  \eqref{rp} and the fact that $(G,\pi,\prs)$ is a Riemann-Poisson Lie group if and only if, for any $\al,\be,\ga\in\G^*$,
	\[ [\pi,\pi]_S(\al^\ell,\be^\ell,\ga^\ell)=0\esp \D\pi(\al^\ell,\be^\ell,\ga^\ell)=0. \qedhere\]
\end{proof}

Conversely, given a triple $(\G,r,\va)$ where $\G$ is a real Lie algebra,  $r\in\wedge^2\G$ and $\va$ a Euclidean product on $\G$ satisfying the conditions $(i)$ and $(ii)$ in Proposition \ref{pr1} then, for any Lie group $G$ whose Lie algebra is $\G$, if $\pi$ and $\prs$ are the left invariant bivector field and the left invariant metric associated to $(r,\va)$ then $(G,\pi,\prs)$ is a Riemann-Poisson Lie group.
\begin{defn}
A {\it Riemann-Poisson Lie algebra} is a triple $(\G,r,\va)$ where $\G$ is a real Lie algebra,  $r\in\wedge^2\G$ and $\va$ a Euclidean product on $\G$ satisfying the conditions $(i)$ and $(ii)$ in Proposition \ref{pr1}.\end{defn}

To end this section, we give another characterization of the solutions of the classical Yang-Baxter equation \eqref{yb1} which will be useful later.

We observe that  $r\in \wedge^2{\G}$ is
equivalent to the data of a vector subspace $S\subset\G$ and a
nondegenerate 2-form $\om_r\in\wedge^2S^*$.

Indeed, for $r\in \wedge^2{\G}$, we put $S=\mathrm{Im}r_\#$ and
$\om_r(u,v)=r(r_\#^{-1}(u),r_\#^{-1}(v))$ where $u,v\in S$ and
$r_\#^{-1}(u)$ is any antecedent of $u$ by $r_\#$.

Conversely, let $(S,\om)$ be  a vector subspace of $\G$
with a non-degenerate 2-form. The 2-form $\om$ defines an
isomorphism $\om^b:S\too S^*$ by $\om^b(u)=\om(u,.)$, we denote by
$\#:S^*\too S$ its inverse and we put $r_\#=\#\circ i^*$ where
$i^*:{\G}^*\too S^*$ is the dual of the inclusion $i:S\inj
\G$.

With this observation in mind, the following proposition gives
another description of the solutions of the Yang-Baxter equation.
\begin{prop}\label{pr2} Let $r\in \wedge^2{\G}$ and $(S,\om_r)$ its
	associated vector subspace. The following assertions are equivalent:
	\begin{enumerate}
		\item $[r,r]=0.$

		\item $S$ is a subalgebra of $\G$ and $$\de\om_r(u,v,w):=\om_r(
		u,[v,w])+\om_r(v ,[w,u])+\om_r( w,[u,v])=0$$for any $u,v,w\in
		S$.
		
	\end{enumerate}

\end{prop}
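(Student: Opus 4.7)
The plan is to establish the identity
\begin{equation*}
[r,r](\al,\be,\ga) \;=\; -\,\de\om_r\bigl(r_\#(\al),r_\#(\be),r_\#(\ga)\bigr)
\end{equation*}
under the hypothesis that $S=\im r_\#$ is a subalgebra of $\G$. Since $r_\#:\G^*\to S$ is surjective, both implications will then follow directly. The identity itself rests on a single observation: for $X\in S$, writing $X=r_\#(\eta)$, the defining relations $\om_r(r_\#(\sigma),r_\#(\tau))=r(\sigma,\tau)=\prec\tau,r_\#(\sigma)\succ$ together with the antisymmetry of $r$ yield
\begin{equation*}
\prec\al,X\succ \;=\; \prec\al,r_\#(\eta)\succ \;=\; r(\eta,\al) \;=\; -r(\al,\eta) \;=\; -\om_r(r_\#(\al),X).
\end{equation*}
Applying this with $X=[v,w]$, $[w,u]$, $[u,v]$ (all lying in $S$ when $S$ is a subalgebra), where $u=r_\#(\al),v=r_\#(\be),w=r_\#(\ga)$, and summing cyclically matches the three terms of \eqref{yb} against the three terms of $-\de\om_r(u,v,w)$.

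For $(1)\Rightarrow(2)$: if $[r,r]=0$, then the morphism property \eqref{mo} already noted by the authors gives $[r_\#(\al),r_\#(\be)]=r_\#([\al,\be]_r)\in S$, so $S$ is a subalgebra. (A self-contained alternative is to restrict \eqref{yb} to $\al\in\ker r_\#$, which collapses the last two terms and forces $\prec\al,[r_\#(\be),r_\#(\ga)]\succ=0$ for every $\al$ in the annihilator of $S$, hence $[r_\#(\be),r_\#(\ga)]\in S$.) The key identity then forces $\de\om_r$ to vanish on $S$.

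For $(2)\Rightarrow(1)$: the assumption that $S$ is a subalgebra lets us apply the key identity to arbitrary $\al,\be,\ga\in\G^*$, and $\de\om_r|_S=0$ immediately yields $[r,r](\al,\be,\ga)=0$. There is no substantive obstacle beyond careful sign bookkeeping; once the paper's conventions $\prec\tau,r_\#(\sigma)\succ=r(\sigma,\tau)$ and $\om_r(r_\#(\sigma),r_\#(\tau))=r(\sigma,\tau)$ are fixed, the calculation is mechanical.
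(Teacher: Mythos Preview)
Your proof is correct and follows essentially the same approach as the paper: both hinge on the identity $[r,r](\al,\be,\ga)=-\de\om_r(r_\#(\al),r_\#(\be),r_\#(\ga))$ once $S$ is a subalgebra, and both obtain the subalgebra property from $[r,r]=0$ via the morphism formula (your ``self-contained alternative'' restricting to $\al\in\ker r_\#$ is just the same computation specialized). Your write-up spells out the mechanism behind the key identity more explicitly than the paper's two-line proof, but the content is the same.
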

\begin{proof} The proposition follows from the following  formulas:
	$$\prec\ga,r_\#([\al,\be]_r)-[r_\#(\al),r_\#(\be)]\succ=-[r,r](\al,\be,\ga),
	\qquad\al,\be,\ga\in\G^*$$and, if
	$S$ is a subalgebra,
	$$[r,r](\al,\be,\ga)=-\de\om_r(r_\#(\al),r_\#(\be),r_\#(\ga)).\qedhere$$\end{proof}

This proposition shows that there is a correspondence between the set of solutions of the Yang-Baxter
equation the set of symplectic subalgebras of $\G$. We
recall that a symplectic algebra  is a Lie   algebra $S$
endowed with a non-degenerate 2-form $\om$ such that $\de\om=0.$

\section{A characterization of Riemann-Poisson Lie algebras}\label{section3}

In this section, we combine Propositions \ref{pr1} and \ref{pr2} to establish a characterization of Riemann-Poisson Lie algebras which will be used later to build such  Lie algebras. We establish first an intermediary result.

\begin{prop}\label{pr0} Let $(\G,r,\va)$ be a Lie algebra endowed with $r\in\wedge^2\G$ and a Euclidean product $\va$. Denote by $\mathcal{I}=\ker r_\#$, $\mathcal{I}^\perp$ its orthogonal with respect to $\va^*$ and $A$ the Levi-Civita product associated to $(\G^*,\br_r,\va^*)$. Then $(\G,r,\va)$ is a Riemann-Poisson Lie algebra if and only if:
	
	\begin{enumerate}\item[$(c_1)$] $[r,r]=0$.
		\item[$(c_2)$] For all $\alpha \in \mathcal{I}, A_\alpha = 0.$
		
		\item[$(c_3)$] For all $\alpha, \beta, \gamma \in \mathcal{I}^\perp,$ 
		$A_\al\be\in \mathcal{I}^\perp$ and
		$$
			r(A_\alpha\beta,\gamma) + r(\beta,A_\alpha\gamma) = 0.
		$$
	\end{enumerate}
\end{prop}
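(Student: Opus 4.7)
The strategy is to invoke Proposition~\ref{pr1}, which replaces the Riemann--Poisson condition by $(i)\ [r,r]=0$ and $(ii)\ r(A_\alpha\beta,\gamma)+r(\beta,A_\alpha\gamma)=0$ for all $\alpha,\beta,\gamma\in\G^*$, and then to prove that in the presence of $(i)$ one has $(ii)\Longleftrightarrow (c_2)+(c_3)$; the identification $(c_1)=(i)$ settles the rest. Three structural facts about $\mathcal{I}=\ker r_\#$ will be used throughout. (a) Since $r(\alpha,\beta)=\langle\beta,r_\#(\alpha)\rangle$ and $r$ is antisymmetric, $r$ vanishes whenever one of its arguments lies in $\mathcal{I}$, so $r$ is carried by its nondegenerate restriction to $\mathcal{I}^\perp\times\mathcal{I}^\perp$. (b) $\mathcal{I}$ is an abelian ideal of $(\G^*,[\cdot,\cdot]_r)$, which follows at once from \eqref{br} and \eqref{mo}. (c) The product $A$ is metric, so each $A_\alpha$ is $\va^*$-skew-adjoint, and therefore any subspace invariant under all the $A_\alpha$'s has an invariant orthogonal complement.

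For the direction $(i)+(ii)\Longrightarrow (c_2)+(c_3)$, I would first specialise $(ii)$ to $\beta\in\mathcal{I}$: the second term drops by (a), yielding $r(A_\alpha\beta,\gamma)=0$ for every $\gamma$ and hence $A_\alpha\beta\in\mathcal{I}$. Thus every $A_\alpha$ preserves $\mathcal{I}$ and, by (c), also $\mathcal{I}^\perp$; this is the inclusion claimed in $(c_3)$, while the identity in $(c_3)$ is just $(ii)$ restricted to $\mathcal{I}^\perp$. To establish $(c_2)$, fix $\alpha\in\mathcal{I}$. If $\beta\in\mathcal{I}^\perp$, the torsionless identity $A_\alpha\beta=A_\beta\alpha+[\alpha,\beta]_r$ places $A_\alpha\beta$ in $\mathcal{I}$ ($A_\beta\alpha\in\mathcal{I}$ by invariance, $[\alpha,\beta]_r\in\mathcal{I}$ by (b)) and simultaneously in $\mathcal{I}^\perp$ by invariance, forcing $A_\alpha\beta=0$. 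If $\beta\in\mathcal{I}$, the same identity reduces to $A_\alpha\beta=A_\beta\alpha$, so the trilinear form $T(\alpha,\beta,\gamma):=\va^*(A_\alpha\beta,\gamma)$ on $\mathcal{I}^3$ is symmetric in $\alpha,\beta$ while being skew in $\beta,\gamma$ by (c); a Bianchi-type shuffle then forces $T\equiv 0$, and together with $A_\alpha\beta\in\mathcal{I}$ this again gives $A_\alpha\beta=0$.

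For the converse $(c_1)+(c_2)+(c_3)\Longrightarrow (i)+(ii)$, only $(ii)$ needs work. Given $\alpha,\beta,\gamma\in\G^*$, $(c_2)$ lets us reduce to $\alpha\in\mathcal{I}^\perp$; decomposing $\beta=\beta_0+\beta_1$ and $\gamma=\gamma_0+\gamma_1$ along $\G^*=\mathcal{I}\oplus\mathcal{I}^\perp$, the torsion identity and $(c_2)$ yield $A_\alpha\beta_0=[\alpha,\beta_0]_r\in\mathcal{I}$ (and similarly for $\gamma_0$), so by (a) every contribution involving an $\mathcal{I}$-component in $r(A_\alpha\beta,\gamma)+r(\beta,A_\alpha\gamma)$ vanishes; this reduces the expression to $r(A_\alpha\beta_1,\gamma_1)+r(\beta_1,A_\alpha\gamma_1)$, which is $0$ by the identity in $(c_3)$. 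The genuinely delicate step is the case $\alpha,\beta\in\mathcal{I}$ in the forward direction, where neither invariance nor the ideal property alone is enough; one must combine the symmetry $A_\alpha\beta=A_\beta\alpha$ on $\mathcal{I}$ (forced by $[\cdot,\cdot]_r$ being trivial there) with the metric skew-adjointness, and it is the Bianchi-type shuffle that finally closes the argument.
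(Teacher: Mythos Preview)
Your argument is correct and follows essentially the same route as the paper: both reduce to Proposition~\ref{pr1} and analyse condition $(ii)$ case by case along the splitting $\G^*=\mathcal{I}\oplus\mathcal{I}^\perp$, using that $\mathcal{I}$ is an abelian ideal and that each $A_\alpha$ is $\va^*$-skew. The only cosmetic differences are that the paper lists the five mixed cases of \eqref{eq1} explicitly before treating them, and for $\alpha,\beta\in\mathcal{I}$ reads $A_\alpha\beta\in\mathcal{I}^\perp$ directly off the Koszul formula~\eqref{lp} rather than via your equivalent ``Bianchi shuffle''.
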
	
	
\begin{proof} By using the splitting $\G^*=\mathcal{I}\oplus \mathcal{I}^\perp$, on can see that the conditions $(i)$ and $(ii)$ in Proposition \ref{pr1} are equivalent to 
	\begin{equation}\label{eq1} \begin{cases}[r,r]=0,\\ r(A_\alpha\beta,\gamma)  = 0,\al\in \mathcal{I}, \be\in \mathcal{I},\ga\in \mathcal{I}^\perp,\\
	r(A_\alpha\beta,\gamma) + r(\beta,A_\alpha\gamma) = 0,\al\in \mathcal{I}, \be\in \mathcal{I}^\perp,\ga\in \mathcal{I}^\perp,\\
	r(A_\alpha\beta,\gamma)  = 0,\al\in \mathcal{I}^\perp, \be\in \mathcal{I},\ga\in \mathcal{I}^\perp,\\
	r(A_\alpha\beta,\gamma) + r(\beta,A_\alpha\gamma) = 0,\al\in \mathcal{I}^\perp, \be\in \mathcal{I}^\perp,\ga\in \mathcal{I}^\perp.
	\end{cases} \end{equation}
	Suppose that the conditions $(c_1)$-$(c_3)$ hold. Then for any $\al\in \mathcal{I}$ and $\be\in\mathcal{I}^\perp$, $A_\be\al=[\be,\al]_r$ and hence $r_\#(A_\be\al)=[r_\#(\be),r_\#(\al)]=0$ and hence the equations in \eqref{eq1}	holds.

Conversely, suppose that \eqref{eq1} holds. Then $(c_1)$ holds obviously. 

For any $\al,\be\in\mathcal{I}$, the second equation in \eqref{eq1} is equivalent to  $A_\al\be\in\mathcal{I}$ and we have from \eqref{br} and \eqref{lp}
$[\al,\be]_r=0$ and $A_\al\be\in \mathcal{I}^\perp$. Thus $A_\al\be=0$. 

Take now $\al\in \mathcal{I}$ and $\be\in \mathcal{I}^\perp$. For any $\ga\in \mathcal{I}$, $\va^*(A_\al\be,\ga)=-\va^*(\be,A_\al\ga)=0$ and hence $A_\al\be\in \mathcal{I}^\perp$.
On the other hand, 
$$r_\#([\al,\be]_r)=r_\#(A_\al\be)-r_\#(A_\be\al)\stackrel{\eqref{mo}}=[r_\#(\al),r_\#(\be)]=0.$$ So,  for any $\ga\in\mathcal{I}^\perp$,
\begin{eqnarray*}
\prec\ga,r_\#(A_\al\be)\succ&=&\prec\ga,r_\#(A_\be\al)\succ
=r(A_\be\al,\ga)
\stackrel{\eqref{eq1}}=0.
\end{eqnarray*}
This shows that $A_\al\be\in \mathcal{I}$ and hence $A_\al\be=0$. Finally, $(c_2)$ is  true. Now, for any $\al\in \mathcal{I}^\perp$, the fourth equation in \eqref{eq1} implies that $A_\al$ leaves invariant $\mathcal{I}$ and since it is skew-symmetric it leaves invariant $\mathcal{I}^\perp$ and $(c_3)$ follows.   This completes the proof.
\end{proof}

\begin{prop}\label{prbi} Let $(\G,\va,r)$ be a Lie algebra endowed with a solution of classical Yang-Baxter equation and a bi-invariant Euclidean product, i.e.,
	\[ \va(\ad_uv,w)+\va(v,\ad_uw)=0,\quad u,v,w\in\G. \]
Then $(\G,\va,r)$ is Riemann-Poisson Lie algebra if and only if $\mathrm{Im}r_\#$ is an abelian subalgebra. 	
\end{prop}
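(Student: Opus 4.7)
The plan is to work on $\G$ itself via the musical isomorphism $\va^\flat\colon\G\to\G^*$ associated with $\va$. Set $\phi=r_\#\circ\va^\flat\colon\G\to\G$; since $r\in\wedge^2\G$, $\phi$ is skew-adjoint with respect to $\va$, so $\mathrm{Im}\,\phi=S:=\mathrm{Im}\,r_\#$ and $\ker\phi=S^\perp$, and $\phi|_S\colon S\to S$ is a bijective skew-adjoint endomorphism. Via $\va^\flat$, $\mathcal{I}^\perp\subset\G^*$ corresponds to $S\subset\G$. By Proposition \ref{pr2}, $[r,r]=0$ guarantees that $S$ is already a subalgebra of $\G$, so the only issue is whether it is abelian.

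The main computation is to transport the Levi-Civita product $A$ of $(\G^*,[\cdot,\cdot]_r,\va^*)$ to $\G$. Under $\va^\flat$ the Koszul bracket corresponds on $\G$ to $\{u,v\}=[\phi(u),v]-[\phi(v),u]$. Writing the Koszul formula
$$2\va(\tilde A_uv,w)=\va(\{u,v\},w)+\va(\{w,u\},v)+\va(\{w,v\},u)$$
and applying the bi-invariance identity $\va([x,y],z)=\va(x,[y,z])$ repeatedly, four of the six terms cancel and one obtains the clean formula
$$\tilde A_uv=[\phi(u),v],\qquad\text{i.e.}\qquad\tilde A_u=\ad_{\phi(u)}.$$
This is the bi-invariant analogue of the classical $A_uv=\tfrac12[u,v]$ and is the step I expect to require the most careful bookkeeping.

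With this formula I apply Proposition \ref{pr0}. Condition $(c_2)$ is automatic since $\phi$ vanishes on $\ker\phi$. The inclusion $\tilde A_uv\in S$ in $(c_3)$ is automatic because $S$ is a subalgebra. The remaining identity in $(c_3)$, after a short expansion using bi-invariance, reads $\va\bigl(\phi([\phi(u),v])-[\phi(u),\phi(v)],\,w\bigr)=0$ for all $u,v,w\in S$; since the vector in question lies in $S$ and $S\cap S^\perp=\{0\}$, this is equivalent to the single relation
$$\phi([a,v])=[a,\phi(v)]\qquad\text{for all }a,v\in S,\quad(\ast)$$
where one has set $a=\phi(u)$ and used the bijectivity of $\phi|_S$.

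If $S$ is abelian then both sides of $(\ast)$ vanish, so the reverse direction is immediate. For the forward direction, write $\psi=\phi|_S$; antisymmetry of the bracket combined with $(\ast)$ also yields $\psi([a,v])=[\psi(a),v]$, and then the chain
$$\va([\psi(a),v],w)=\va(\psi(a),[v,w])=-\va(a,\psi([v,w]))=-\va(a,[\psi(v),w])=-\va([a,\psi(v)],w),$$
valid for all $a,v,w\in S$, shows $[\psi(a),v]+[a,\psi(v)]=0$. Combined with $[\psi(a),v]=[a,\psi(v)]$ (both equal $\psi([a,v])$), this forces $[\psi(a),v]=0$; since $\psi$ maps $S$ onto $S$, we conclude $[S,S]=0$, so $S$ is abelian. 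The principal obstacle is the derivation of $\tilde A_uv=[\phi(u),v]$; once it is in hand the rest is a short manipulation with bi-invariance and skew-adjointness.
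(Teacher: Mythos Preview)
Your argument is correct, but it takes a longer and somewhat different route than the paper's. Both proofs begin with the same observation: bi-invariance forces the Levi--Civita product to collapse to $A_\alpha\beta=\ad^*_{r_\#(\alpha)}\beta$ (your $\tilde A_u=\ad_{\phi(u)}$ is exactly this, transported to $\G$). From there the paper stays on $\G^*$, plugs directly into condition $(ii)$ of Proposition~\ref{pr1} (not Proposition~\ref{pr0}), and expands
\[
r(\ad^*_{r_\#(\alpha)}\beta,\gamma)+r(\beta,\ad^*_{r_\#(\alpha)}\gamma)
=\prec\beta,[r_\#(\alpha),r_\#(\gamma)]\succ-\prec\gamma,[r_\#(\alpha),r_\#(\beta)]\succ
\stackrel{[r,r]=0}{=}\prec\alpha,[r_\#(\beta),r_\#(\gamma)]\succ,
\]
so the Yang--Baxter equation immediately reduces the Riemann--Poisson condition to the vanishing of $[\mathrm{Im}\,r_\#,\mathrm{Im}\,r_\#]$, finishing in three lines.

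Your detour through Proposition~\ref{pr0} and the relation $(\ast)$ works, but it replaces this single use of Yang--Baxter with a longer skew-adjointness argument: you first extract the condition $\psi([a,v])=[a,\psi(v)]$ on $S$, then combine it with the chain of identities to force $[\psi(a),v]=-[a,\psi(v)]$, hence $[S,S]=0$. This is valid and self-contained, but it does not exploit $[r,r]=0$ beyond knowing $S$ is a subalgebra, which is why you need the extra algebra at the end. The paper's route is shorter precisely because it uses the Yang--Baxter identity a second time, as a cyclic relation, rather than only for the subalgebra property.
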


\begin{proof} Since $\va$ is bi-invariant, one can see easily that for any $u\in\G$, $\ad_u^*$ is skew-symmetric with respect to $\va^*$ and hence the Levi-Civita product $A$ associated to $(\G^*,\br_r,\va^*)$ is given by $A_\al\be=\ad_{r_\#(\al)}^*\be$. So, $(\G,\va,r)$ is Riemann-Poisson Lie algebra if and only if, for any $\al,\be,\ga\in\G^*$,
	\begin{eqnarray*}
	0&=&r(\ad_{r_\#(\al)}^*\be,\ga)+r(\be,\ad_{r_\#(\al)}^*\ga)\\
	&=&\prec\be,[r_\#(\al),r_\#(\ga)]\succ-\prec\ga,[r_\#(\al),r_\#(\be)]\succ\\
	&\stackrel{\eqref{yb1}}=&\prec\al,[r_\#(\be),r_\#(\ga)]\succ
	\end{eqnarray*}and the result follows.
	\end{proof}	
 
 Let $(\G,\br)$ be a Lie algebra, $r\in\wedge^2\G$ and $\va$ a Euclidean product on $\G$. Denote by $(S,\om_r)$ the symplectic vector subspace associated to $r$ and by $\#:\G^*\too\G$ the isomorphism given by $\va$. Note that the Euclidean product on $\G^*$ is given by $\va^*(\al,\be)=\va(\#(\al),\#(\be))$.
 We have
 \[ \G^*=\mathcal{I}\oplus \mathcal{I}^\perp\esp \G=S\oplus S^\perp,\]where $\mathcal{I}=\ker r_\#$.
 Moreover, $r_\#:\mathcal{I}^\perp\too S$ is an isomorphism, we denote by $\tau:S\too \mathcal{I}^\perp$ its inverse. From the relation
 \[ \va(\#(\al),r_\#(\be))=\prec\al,r_\#(\be)\succ=r(\be,\al), \]we deduce that
 $\#:\mathcal{I}\too S^\perp$ is an isomorphism and hence $\#:\mathcal{I}^\perp\too S$ is also an isomorphism.

 Consider the isomorphism $J:S\too S$ linking $\om_r$ to $\va_{|S}$, i.e.,
 \[ \om_r(u,v)=\rho(Ju,v),\quad u,v\in S. \]On can see easily that $J=-\#\circ\tau$.

\begin{thm}\label{main1} With the notations above, $(\G,r,\va)$ is a Riemann-Poisson Lie algebra if and only if the following conditions hold:
	\begin{enumerate}
		\item $(S,\va_{|S},\om_r)$ is a K\"ahler Lie subalgebra, i.e., for all $s_1, s_2, s_3 \in S$,
		\begin{equation}\label{om} \omega_r(\nabla_{s_1}s_2 , s_3) + \omega_r(s_2 , \nabla_{s_1}s_3) = 0,\end{equation}
		where $\nabla$ is the Levi-Civita product associated to $(S ,\br, \va_{|S})$.
		\item for all $s \in S$ and all $u, v \in S^\perp,$
		\begin{equation}\label{perp}
			\va(\phi_{S}(s)(u),v) + \va(u,\phi_{S}(s)(v)) = 0,
		\end{equation}where $\phi_{S}:S\too \mathrm{End}(S^\perp)$, $u\mapsto \mathrm{pr}_{S^\perp}\circ\ad_u$ and $\mathrm{pr}_{S^\perp}:\G\too S^\perp$ is the orthogonal projection.
				\item For all $s_1, s_2 \in S$ and all $u \in S^\perp,$
		\begin{equation}\label{oms}
			\om_r(\phi_{S^\perp}(u)(s_1),s_2)+\om_r(s_1,\phi_{S^\perp}(u)(s_1))=0,
		\end{equation}where $\phi_{S^\perp}:S^\perp\too \mathrm{End}(S)$, $u\mapsto \mathrm{pr}_{S}\circ\ad_u$ and $\mathrm{pr}_{S}:\G\too S$ is the orthogonal projection.
	\end{enumerate}
\end{thm}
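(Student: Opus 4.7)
The plan is to apply Proposition~\ref{pr0} and translate each of its three conditions $(c_1)$--$(c_3)$ into tangent-side statements on $(S,\om_r)$ and $S^\perp$, using the isomorphisms $r_\#:\mathcal{I}^\perp\to S$, $\#:\mathcal{I}\to S^\perp$, and $J=-\#\circ\tau$ set up just before the theorem. To start, Proposition~\ref{pr2} turns $(c_1)$ into ``$S$ is a Lie subalgebra of $\G$ with $\delta\om_r=0$'', which is the symplectic Lie subalgebra half of condition~(1).

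For condition $(c_2)$ I would parametrize $\alpha\in\mathcal{I}$ by $\alpha=\va(x,\cdot)$ with $x\in S^\perp$ and expand the Koszul formula~\eqref{lp} for $A_\alpha\beta$; since $r_\#(\alpha)=0$, the bracket formula~\eqref{br} simplifies sharply. Testing $A_\alpha=0$ against $\beta,\gamma$ across the decomposition $\G^*=\mathcal{I}\oplus\mathcal{I}^\perp$ produces two nontrivial tangent-side identities. When one of $\beta,\gamma$ lies in $\mathcal{I}$ and the other in $\mathcal{I}^\perp$, the surviving relation (after writing elements of $\mathcal{I}$ as $\va(y,\cdot)$ with $y\in S^\perp$) is the $\va$-skewness of $\ad_s$ on $S^\perp$ for $s\in S$, which is exactly~\eqref{perp}, i.e.\ condition~(2). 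When both $\beta,\gamma\in\mathcal{I}^\perp$, the relation reduces to $\mathrm{pr}_{\mathcal{I}}[\tau(u),\tau(v)]_r=0$, i.e.\ $[\tau(u),\tau(v)]_r\in\mathcal{I}^\perp$; using $\#(\tau(u))=-Ju$ and $\om_r(a,b)=\va(Ja,b)$, this unfolds into~\eqref{oms}, the $\om_r$-skewness of $\phi_{S^\perp}(x)$, i.e.\ condition~(3). The membership part ``$A_\alpha\beta\in\mathcal{I}^\perp$'' of $(c_3)$ is equivalent to this same identity, so it is already accounted for.

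The remaining ingredient is the equation $r(A_\alpha\beta,\gamma)+r(\beta,A_\alpha\gamma)=0$ in $(c_3)$. With $\alpha=\tau(u)$, $\beta=\tau(v)$, $\gamma=\tau(w)$ and the $\mathcal{I}$-component of $A_{\tau(u)}\tau(v)$ already known to vanish, write $A_{\tau(u)}\tau(v)=\tau(B(u,v))$ with $B(u,v)\in S$. Using $r(\tau(a),\tau(b))=\om_r(a,b)$ the equation becomes $\om_r(B(u,v),w)+\om_r(v,B(u,w))=0$, and a short Koszul computation (using $[\tau(u),\tau(v)]_r=\tau([u,v])$ from~\eqref{mo}) identifies $B$ as the Levi-Civita product $\nabla^g$ of $(S,\br,g)$, where $g(u,v):=\va(Ju,Jv)$ is the metric that $\va^*|_{\mathcal{I}^\perp}$ transports to $S$ via $\tau$.

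The main technical obstacle is the mismatch between this $\nabla^g$ and the Levi-Civita product $\nabla$ of $(S,\br,\va_{|S})$ that appears in~\eqref{om}: the cotangent-side Koszul formula naturally produces $\nabla^g$, not $\nabla$. I would close the gap with a K\"ahler-type uniqueness argument. If $\nabla^g\om_r=0$, then $\nabla^g J=0$ (hence $\nabla^g J^{-1}=0$); writing $\va(u,v)=g(J^{-1}u,J^{-1}v)$ and combining $\nabla^g g=0$ with the Leibniz rule yields $\nabla^g\va=0$, so by uniqueness of the torsion-free metric product $\nabla^g=\nabla$, and the two K\"ahler identities coincide. The converse direction is symmetric, starting from $\nabla\om_r=0$, and uses the analogous identities $\nabla J=0$ and $\nabla g=0$. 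This reconciles the equation of $(c_3)$ with the K\"ahler part of condition~(1) and completes the equivalence.
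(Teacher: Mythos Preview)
Your proposal is correct and follows essentially the same route as the paper. Both arguments reduce to Proposition~\ref{pr0}, split $(c_2)$ according to $\G^*=\mathcal{I}\oplus\mathcal{I}^\perp$ to obtain \eqref{perp} and \eqref{oms}, transport $(c_3)$ to $S$ via $r_\#$ to get $\nabla^g\om_r=0$ for the metric $g(u,v)=\va(Ju,Jv)$, and then run the K\"ahler uniqueness argument ($\nabla^g\om_r=0\Rightarrow\nabla^gJ=0\Rightarrow\nabla^g\va_{|S}=0\Rightarrow\nabla^g=\nabla$) to identify $\nabla^g$ with the Levi-Civita product of $\va_{|S}$. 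Your intermediate observation that both the $\beta,\gamma\in\mathcal{I}^\perp$ case of $(c_2)$ and the membership clause of $(c_3)$ reduce to $[\mathcal{I}^\perp,\mathcal{I}^\perp]_r\subset\mathcal{I}^\perp$ is a clean repackaging that the paper leaves implicit in its direct Koszul computations, but the underlying identities are the same.
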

\begin{proof} Suppose first that $(\G,r,\va)$ is a Riemann-Poisson Lie algebra.
	 According to Propositions \ref{pr0} and \ref{pr2}, this is equivalent to
	 \begin{equation}\label{c}
	 \begin{cases}
	 (S,\om_r)\;\mbox{ is a symplectic subalgebra},\\
	 \forall \al\in \mathcal{I},\; A_\al=0,\\
	 \forall\; \al,\be,\ga\in \mathcal{I}^\perp,\; 
	 A_\al\be\in \mathcal{I}^\perp\esp r(A_\al\be,\ga)+r(\be,A_\al\ga)=0,
	 \end{cases}
	 \end{equation}where $A$ is the Levi-Civita product of $(\G^*,\br_r,\va^*)$.

For $\al,\be\in \mathcal{I}$ and $\ga\in \mathcal{I}^\perp$,
\begin{eqnarray}
2\va^*(A_\al\be,\ga)&=&\va^*([\al,\be]_r,\ga)+\va^*([\ga,\be]_r,\al)+
\va^*([\ga,\al]_r,\be)\nonumber\\
&=&\va^*(\ad_{r_\#(\ga)}^*\be,\al)	+\va^*(\ad_{r_\#(\ga)}^*\al,\be)\nonumber\\
&=&-\prec \be,[r_\#(\ga),\#(\al)]\succ-\prec \al,[r_\#(\ga),\#(\be)]\succ\nonumber\\
&=&-\va(\#(\be),[r_\#(\ga),\#(\al)])	-\va(\#(\al),[r_\#(\ga),\#(\be)]).\label{i}
\end{eqnarray}	Since $\#:\mathcal{I}\too S^\perp$ and $r_\#: \mathcal{I}^\perp\too S$ are isomorphisms, we deduce from \eqref{i} that $A_\al\be=0$ for any $\al,\be\in \mathcal{I}$ is equivalent to \eqref{perp}.

For $\al\in \mathcal{I}$ and $\be,\ga\in \mathcal{I}^\perp$,
\begin{eqnarray}
	2\va^*(A_\al\be,\ga)&=&\va^*([\al,\be]_r,\ga)+\va^*([\ga,\be]_r,\al)+
	\va^*([\ga,\al]_r,\be)\nonumber\\
	&=&-\va^*(\ad_{r_\#(\be)}^*\al,\ga)-\va^*(\ad_{r_\#(\be)}^*\ga,\al)
	+\va^*(\ad_{r_\#(\ga)}^*\be,\al)	+\va^*(\ad_{r_\#(\ga)}^*\al,\be)\nonumber\\
	&=&\prec \al,[r_\#(\be),\#(\ga)]\succ+\prec \ga,[r_\#(\be),\#(\al)]\succ-\prec \be,[r_\#(\ga),\#(\al)]\succ-\prec \al,[r_\#(\ga),\#(\be)]\succ\nonumber\\
	&=&\va(\#(\ga),[r_\#(\be),\#(\al)])-\va(\#(\be),[r_\#(\ga),\#(\al)])+\prec \al,[r_\#(\be),\#(\ga)]\succ-\prec \al,[r_\#(\ga),\#(\be)]\succ\nonumber\\
	&=&-\va(J\circ r_\#(\ga),[r_\#(\be),\#(\al)])+\va(J\circ r_\#(\be),[r_\#(\ga),\#(\al)])+\prec \al,[r_\#(\be),\#(\ga)]\succ-\prec \al,[r_\#(\ga),\#(\be)]\succ\nonumber\\
	&=&-\om_r( r_\#(\ga),\mathrm{pr}_S([r_\#(\be),\#(\al)]))-\om_r(\mathrm{pr}_S([r_\#(\ga),\#(\al)]), r_\#(\be))\nonumber\\
	&&+\prec \al,[r_\#(\be),\#(\ga)]\succ-\prec \al,[r_\#(\ga),\#(\be)]\succ.\label{ii}
\end{eqnarray} Now, $\#(\be),\#(\ga)\in S$ and $r_\#(\be),r_\#(\ga)\in S$ and since $S$ is a subalgebra we deduce that $[r_\#(\be),\#(\ga)],[r_\#(\ga),\#(\be)]\in S$ and hence
\[ \prec \al,[r_\#(\be),\#(\ga)]\succ=\prec \al,[r_\#(\ga),\#(\be)]\succ=0. \]
 We have also $\#:\mathcal{I}\too S^\perp$ and $r_\#: \mathcal{I}^\perp\too S$ are isomorphisms so that, by virtue of \eqref{ii}, $A_\al\be=0$ for any $\al\in \mathcal{I}$ and $\be\in \mathcal{I}^\perp$ is equivalent to \eqref{oms}.

On the other hand,
for any $\al,\be,\ga\in \mathcal{I}^\perp$, since $\#=-J\circ r_\#$, the relation
\[ 2\va^*(A_\al\be,\ga)=\va^*([\al,\be]_r,\ga)+\va^*([\ga,\be]_r,\al)+\va^*([\ga,\al]_r,\be) \]can be written
\[ 2\va(J\circ r_\#(A_\al\be),J\circ r_\#(\ga))=
\va(J\circ r_\#([\al,\be]_r),J\circ r_\#(\ga))+\va(J\circ r_\#([\ga,\be]_r),J\circ r_\#(\al))+\va(J\circ r_\#([\ga,\al]_r),J\circ r_\#(\be)). \]But $r_\#([\al,\be]_r)=[r_\#(\al),r_\#(\be)]$ and hence
\[ 2\langle r_\#(A_\al,\be),r_\#(\ga))\rangle_J=\langle [r_\#(\al),r_\#(\be)],r_\#(\ga)\rangle_J+\langle [r_\#(\g	),r_\#(\be)],r_\#(\al)\rangle_J+\langle [r_\#(\ga),r_\#(\al)],r_\#(\be)\rangle_J, \]where $\langle u,v\rangle_J=\va(Ju,Jv)$. This shows that $r_\#(A_\al\be)=\na_{r_\#(\al)}r_\#(\be)$ where $\na$ is the Levi-Civita product of $(S,\br,\prs_J)$ and the third relation in \eqref{c} is equivalent to
\[ \om_r(\na_uv,w)+\om_r(v,\na_uw)=0,\quad u,v,w\in S. \]
This is equivalent to $\na_uJv=J\na_uv$. Let us show that $\na$ is actually the Levi-Civita product of $(S,\br,\va)$. Indeed, for any $u,v,w\in S$, $\na_uv-\na_vu=[u,v]$ and
\begin{eqnarray*} \va(\na_uv,w)+\va(\na_uw,v)&=&\langle J^{-1}\na_uv,J^{-1}w\rangle_J +\langle J^{-1}\na_uw,J^{-1}v\rangle_J \\
&=&\langle \na_uJ^{-1}v,J^{-1}w\rangle_J +\langle \na_uJ^{-1}w,J^{-1}v\rangle_J\\
&=&0.	
	\end{eqnarray*}
So we have shown the direct part of the theorem. The converse can be deduced easily from the relations we established in the proof of the direct part.
	\end{proof}

\begin{exem}\begin{enumerate}
		\item Let $G$ be a compact connected Lie group, $\G$ its Lie algebra and $T$ an even dimensional torus of $G$. Choose a bi-invariant Riemannian metric $\prs$ on $G$,  a nondegenerate $\om\in\wedge^2 S^*$ where $S$ is the Lie algebra of $T$ and put $\va=\prs(e)$. Let $r\in\wedge^2\G$ be  the solution  of the classical Yang-Baxter associated to $(S,\om)$. By using either Proposition \ref{prbi} or Theorem \ref{main1}, one can see easily that $(\G,\va,r)$ is a Riemann-Poisson Lie algebra and hence $(G,\prs,\pi)$ is a Riemann-Poisson Lie group where $\pi$ is the left invariant Poisson tensor associated to $r$. According to Theorem \ref{thfo}, the orbits of the right action  of $T$ on $G$ defines a Riemannian and K\"ahler foliation. For instance, $G=\mathrm{SO}(2n)$,  $T=\operatorname{Diagonal}(D_1,\ldots,D_n)$ where $D_i=\left(\begin{matrix}
		\cos(\theta_i)&\sin(\theta_i)\\-\sin(\theta_i)&\cos(\theta_i)
		\end{matrix}\right)$ and $\prs=-K$ where $K$ is the Killing form.

\end{enumerate}
	
\end{exem}

\section{Construction of Riemann-Poisson Lie algebras}\label{section4}

In this section, we give a general method for building Riemann-Poisson Lie algebras and we use  it to give all Riemann-Poisson Lie algebras up to dimension 5.

According to Theorem \ref{main1}, to build Riemann-Poisson Lie algebras one needs to solve the following problem.
\begin{problem} We look for:
	\begin{enumerate}
		\item A K\"ahler Lie algebra $(\h,\br_\h,\va_\h,\om)$,
		\item a  Euclidean vector space $(\p,\va_\p)$,
		\item a bilinear skew-symmetric map $\br_\p:\p\times\p\too\p$,
		\item a bilinear skew-symmetric map ${\mu}:\p\times\p\too\h$,
		\item  two linear  maps $\phi_\p:\p\too\mathrm{sp}(\h,\om)$ and 	$\phi_\h:\h\too \mathrm{so}(\p)$ where $\mathrm{sp}(\h,\om)=\left\{J:\h\too\h, J^\om+J=0 \right\}$ and $\mathrm{so}(\p)=\left\{ A:\p\too\p, A^*+A=0  \right\}$,  $J^\om$ is the adjoint with respect to $\om$ and $A^*$ is the adjoint with respect to $\va_\p$,
	\end{enumerate}
	 such that the bracket $\br$ on $\g=\h\oplus\p$ given, for any $a,b\in\p$ and $u,v\in\h$, by
	\begin{equation}\label{bra} [u,v]=[u,v]_\h,\;[a,b]={\mu}(a,b)+[a,b]_\p,\;[a,u]=-[u,a]=\phi_\p(a)(u)-\phi_\h(u)(a) \end{equation}is a Lie bracket. 
	
	  In this case, $(\g,\br)$ endowed with $r\in\wedge^2\g$ associated to $(\h,\om)$ and the Euclidean product $\va=\va_\h\oplus\va_\p$ becomes, by virtue of Theorem \ref{main1}, a Riemann-Poisson Lie algebra.
	\label{pro1}
\end{problem}

\begin{prop} \label{pr10} With the data and notations of Problem \ref{pro1},  the bracket given by \eqref{bra} is a Lie bracket if and only if, for any $u,v\in\h$ and $a,b,c\in\p$,
\begin{equation}\label{eqpro}	\begin{cases}
	\phi_\p(a)([u,v]_\h)=[u,\phi_\p(a)(v)]_\h+[\phi_\p(a)(u),v]_\h+\phi_\p(\phi_\h(v)(a))(u)-\phi_\p(\phi_\h(u)(a))(v),\\
	\phi_\h(u)([a,b]_\p)=[a,\phi_\h(u)(b)]_\p+[\phi_\h(u)(a),b]_\p+\phi_\h(\phi_\p(b)(u))(a)-\phi_\h(\phi_\p(a)(u))(b),\\
	\phi_\h([u,v]_\h)=[\phi_\h(u),\phi_\h(v)],\\
	\phi_\p([a,b]_\p)(u)=[\phi_\p(a),\phi_\p(b)](u)+[u,{\mu}(a,b)]_\h-{\mu}(a,\phi_\h(u)(b))-{\mu}(\phi_\h(u)(a),b),\\
	\oint [a,[b,c]_\p]_\p=\oint\phi_\h({\mu}(b,c))(a),\\
	\oint\phi_\p(a)({\mu}(b,c))=\oint{\mu}([b,c]_\p,a),
	\end{cases}\end{equation}where $\oint$ stands for the circular permutation.
	
\end{prop}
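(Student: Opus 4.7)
The plan is straightforward: the bracket \eqref{bra} is skew-symmetric by construction (from the antisymmetry of $\br_\h$, $\br_\p$ and ${\mu}$), so only the Jacobi identity remains to be verified. By multilinearity, it suffices to test Jacobi on triples whose entries each lie purely in $\h$ or $\p$. This gives four cases: $(u,v,w)\in\h^3$, $(u,v,a)\in\h^2\times\p$, $(u,a,b)\in\h\times\p^2$, and $(a,b,c)\in\p^3$.

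For each case, I would expand the cyclic sum $[[X,Y],Z]+[[Y,Z],X]+[[Z,X],Y]$ using the three rules in \eqref{bra} and then split the result into its $\h$-component and its $\p$-component. The case $(u,v,w)$ collapses at once to the Jacobi identity of $(\h,\br_\h)$, which is part of the hypothesis. Each of the three remaining mixed cases yields two equations, one per component, for a total of six, and these should be exactly the six equations listed in \eqref{eqpro}.

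More concretely, my expectation is the following matching: for $(u,v,a)$ the $\h$-component gives the first equation of \eqref{eqpro} and the $\p$-component gives the third; for $(u,a,b)$ the $\p$-component gives the second and the $\h$-component gives the fourth; for $(a,b,c)$ the $\p$-component gives the fifth and the $\h$-component gives the sixth (after using skew-symmetry of $\mu$ to rewrite $\mu(a,[b,c]_\p)=-\mu([b,c]_\p,a)$). The converse direction is immediate, since each equation of \eqref{eqpro} simply records the vanishing of one component of one cyclic sum.

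The only real obstacle is careful bookkeeping of where each expanded subterm lands. For instance $[a,\phi_\h(u)(b)]$ has to be expanded via the mixed rule since $\phi_\h(u)(b)\in\p$, and contributes a $\mu$-piece to the $\h$-part together with a $\br_\p$-piece to the $\p$-part; similarly $[a,\phi_\p(b)(u)]$ uses $\phi_\p(b)(u)\in\h$, producing $\phi_\p(a)(\phi_\p(b)(u))-\phi_\h(\phi_\p(b)(u))(a)$, whose two summands land in different components. Once every mixed bracket has been expanded once and the resulting terms are sorted by component, with the sign conventions of each skew-symmetric pairing tracked correctly, the identification with the six lines of \eqref{eqpro} follows by direct inspection with no further input.
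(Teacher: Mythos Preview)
Your proposal is correct and follows exactly the same approach as the paper: the paper's proof consists of the single sentence ``The equations follow from the Jacobi identity applied to $(a,u,v)$, $(a,b,u)$ and $(a,b,c)$,'' and you have simply spelled out in more detail how each component of each cyclic sum matches one line of \eqref{eqpro}.
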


\begin{proof} The equations follow from  the Jacobi identity applied to  $(a,u,v)$, $(a,b,u)$ and $(a,b,c)$.  
	\end{proof}
	
	We tackle now the task of determining the list of all Riemann-Poisson Lie algebras up to dimension 5. For this purpose, we need
	to solve Problem \ref{pro1} in the following four cases: $(a)$ $\dim\p=1$, $(b)$ $\dim\h=2$ and $\h$ non abelian, $(c)$ $\dim\h=\dim\p=2$ and $\h$ abelian,  $(d)$ $\dim\h=2$, $\dim\p=3$ and $\h$ abelian.

 It is easy to find the solutions of Problem \ref{pro1} when $\dim\p=1$ since in this case $\mathrm{so}(\p)=0$ and the three last equations in \eqref{eqpro} hold obviously. 
\begin{prop}\label{p1} If  $\dim\p=1$ then the solutions of Problem \ref{pro1} are a K\"ahler Lie algebra $(\h,\va,\om)$, $\phi_\h=0$, $\br_\p=0$, $\mu=0$ and $\phi_\p(a)\in\mathrm{sp}(\h,\om)\cap \mathrm{Der}(\h)$ where $a$ is a generator of $\p$ and $\mathrm{Der}(\h)$ the Lie algebra of derivations of $\h$.
	
\end{prop}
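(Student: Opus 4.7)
The plan is to observe that in dimension one everything collapses dramatically, and the six compatibility equations of Proposition \ref{pr10} reduce to a single condition. I will argue case by case on the data of Problem \ref{pro1}.

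First I would note that if $\dim\p=1$, then any skew-symmetric bilinear map out of $\p\times\p$ vanishes identically: this immediately gives $\br_\p=0$ and $\mu=0$. Likewise, any skew-symmetric endomorphism of a one-dimensional Euclidean space is zero, so $\mathrm{so}(\p)=0$ and therefore $\phi_\h=0$. The data of the problem is thus reduced to the K\"ahler Lie algebra $(\h,\va_\h,\om)$ together with a single linear map $\phi_\p:\p\to\mathrm{sp}(\h,\om)$, which is determined by the single element $\phi_\p(a)\in\mathrm{sp}(\h,\om)$ for a chosen generator $a$ of $\p$.

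Next I would substitute these vanishings into the six equations \eqref{eqpro} of Proposition \ref{pr10}. Equations three through six involve $\phi_\h$, $\mu$ or $\br_\p$ on both sides and therefore become $0=0$: the third since $\phi_\h=0$; the fourth because $[a,b]_\p=0$, $\mu(a,b)=0$ and $\phi_\h=0$; and the fifth and sixth because $\br_\p=0$ and $\mu=0$. The second equation also trivializes since $\phi_\h=0$ and $\br_\p=0$ make both sides vanish. The only surviving equation is the first, which with $\phi_\h=0$ becomes
\[
\phi_\p(a)([u,v]_\h)=[u,\phi_\p(a)(v)]_\h+[\phi_\p(a)(u),v]_\h,\quad u,v\in\h,
\]
i.e.\ exactly the statement that $\phi_\p(a)\in\mathrm{Der}(\h)$.

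Combining this with the standing requirement $\phi_\p(a)\in\mathrm{sp}(\h,\om)$ gives $\phi_\p(a)\in\mathrm{sp}(\h,\om)\cap\mathrm{Der}(\h)$, and the converse is immediate by reversing the substitutions. There is no real obstacle here; the only mildly delicate point is to be careful that every one of the six equations really trivializes once the forced vanishings are plugged in, rather than producing a hidden constraint on $\h$ or on $\va_\h$.
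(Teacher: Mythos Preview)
Your argument is correct and follows exactly the line indicated in the paper, which merely remarks that $\mathrm{so}(\p)=0$ and that the last three equations of \eqref{eqpro} hold obviously when $\dim\p=1$. One tiny refinement: in the fourth equation the term $[\phi_\p(a),\phi_\p(b)](u)$ is not killed by the three vanishings you list, but it vanishes anyway since $a$ and $b$ are proportional in the one-dimensional $\p$.
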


Let us solve Problem \ref{pro1} when $\h$ is 2-dimensional non abelian.

		\begin{prop}\label{nab} Let $((\h,\om,\va_\h),(\p,\br_\p,\va_\p),\mu,\phi_\h,\phi_\p)$ be a solution of Problem \ref{pro1} with $\h$ is 2-dimensional non abelian. Then there exists an orthonormal basis $\B=(e_1,e_2)$ of $\h$, $b_0\in\p$ and two constants $\al\not=0$ and $\be\not=0$ such that:
			\begin{enumerate}
				\item[$(i)$] $[e_1,e_2]_\h=\al e_1$, $\om=\be e_1^*\wedge e_2^*$,
				\item[$(ii)$] $(\p,\br_\p,\va_\p)$ is a Euclidean Lie algebra,
				\item[$(iii)$] $\phi_\h(e_1)=0$,  $\phi_\h(e_2)\in\mathrm{Der}(\p)\cap\mathrm{so}(\p)$ and, for any $a\in\p$, $M(\phi_\p(a),\B)= \left(\begin{matrix}
				0&\va_\p(a,b_0)\\0&0
				\end{matrix}  \right)$,
				\item[$(iv)$] for any $a,b\in\p$, $\mu(a,b)=\mu_0(a,b)e_1$ with $\mu_0$ is a 2-cocycle of $(\p,\br_\p)$ satisfying
				\begin{equation}\label{eqh} \mu_0(a,\phi_\h(e_2)b)+\mu_0(\phi_\h(e_2)a,b)=-\va_\p([a,b]_\p,b_0)-\al\mu_0(a,b). \end{equation}
				\end{enumerate}

		\end{prop}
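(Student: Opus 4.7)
The plan is to choose an orthonormal basis of $\h$ adapted to its non-abelian structure and then extract each piece of (ii)--(iv) from the six identities in Proposition~\ref{pr10} by specializing their arguments. For (i), since $\h$ is $2$-dimensional non-abelian its derived ideal $[\h,\h]_\h$ is $1$-dimensional; taking $e_1$ a unit vector spanning it and $e_2$ a unit vector in $e_1^\perp$, we get $[e_1,e_2]_\h=\alpha e_1$ with $\alpha\neq 0$, and every nondegenerate $2$-form on $\h$ is $\omega=\beta\,e_1^*\wedge e_2^*$ with $\beta\neq 0$. A short computation of the Levi-Civita product of $(\h,\br_\h,\va_\h)$ in this basis gives $\nabla_{e_1}e_1=-\alpha e_2$, $\nabla_{e_1}e_2=\alpha e_1$, $\nabla_{e_2}e_i=0$, from which $\nabla\omega=0$ is immediate, so the Kähler hypothesis on $\h$ imposes no further constraint and (i) is established.

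The heart of the proof is the vanishing $\phi_\h(e_1)=0$. The third identity in \eqref{eqpro} makes $\phi_\h:\h\to\mathrm{so}(\p)$ a Lie algebra morphism, so writing $A=\phi_\h(e_1)$ and $B=\phi_\h(e_2)$ (both skew-symmetric) one gets $\mathrm{ad}_B(A)=-\alpha A$. After complexification $B$ has purely imaginary spectrum, and hence so does $\mathrm{ad}_B$ on $\mathrm{gl}(\p)\otimes\Co$, while $-\alpha\in\R^*$; this forces $A=0$. A twin spectral trick now pins down $\phi_\p$: plugging $u=e_1$, $v=e_2$ into identity~1 of \eqref{eqpro} and using $\phi_\h(e_1)=0$, everything collapses to
\[
\phi_\p(Da)(e_1) = \alpha\,\phi_\p(a)(e_1),\qquad D:=\phi_\h(e_2)\in\mathrm{so}(\p).
\]
Decomposing $\p$ into $\ker D$ and the two-dimensional $D$-invariant subspaces on which $D^2=-\theta^2\mathrm{Id}$, one finds $\alpha\,\phi_\p(a)(e_1)=0$ on $\ker D$ and $(\alpha^2+\theta^2)\phi_\p(a)(e_1)=0$ elsewhere, so $\phi_\p(a)(e_1)=0$ throughout $\p$. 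Since $\phi_\p(a)\in\mathrm{sp}(\h,\omega)$ is traceless in $(e_1,e_2)$, only its $(1,2)$-entry remains free; this entry defines a linear form on $\p$, which by Riesz equals $\va_\p(\cdot,b_0)$ for a unique $b_0\in\p$, giving the matrix shape of $\phi_\p(a)$ in (iii).

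All remaining conditions now unwind. In identity~2 of \eqref{eqpro} with $u=e_2$, the two $\phi_\h(\phi_\p(\cdot)(e_2))$ terms vanish (because $\phi_\p(\cdot)(e_2)\in\R e_1$ and $\phi_\h(e_1)=0$), leaving $\phi_\h(e_2)\in\mathrm{Der}(\p)$; combined with the skew-symmetry built into $\phi_\h$ this completes (iii). Write $\mu(a,b)=\mu_1(a,b)e_1+\mu_2(a,b)e_2$: identity~4 with $u=e_1$ forces $\mu_2=0$ (its left-hand side and $[\phi_\p(a),\phi_\p(b)](e_1)$ both vanish), while with $u=e_2$ it yields precisely \eqref{eqh} for $\mu_0:=\mu_1$. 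Identity~5 reduces to Jacobi for $\br_\p$ (its right-hand side vanishes because $\phi_\h(\mu)=\mu_0\phi_\h(e_1)=0$), establishing (ii), and identity~6 reduces to the $2$-cocycle condition on $\mu_0$ (its left-hand side vanishes because $\phi_\p(\cdot)(e_1)=0$), completing (iv). The converse reduces to substituting the data of (i)--(iv) back into \eqref{eqpro} and checking each identity, which is routine given the computations above. The main obstacle is the twin spectral argument killing $\phi_\h(e_1)$ and $\phi_\p(a)(e_1)$; once those are in hand the rest is careful bookkeeping.
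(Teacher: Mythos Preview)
Your proof is correct and follows essentially the same route as the paper: both exploit the six identities of Proposition~\ref{pr10} in the same order, obtain $\phi_\h(e_1)=0$ first, then reduce identity~1 to an eigenvalue equation that kills the $e_1$-column of $\phi_\p(a)$, and finally read off the derivation, cocycle, and compatibility conditions from identities~2, 4, 5, 6 exactly as you do. The only cosmetic difference is that the paper deduces $\phi_\h(e_1)=0$ from the structural fact that a solvable subalgebra of $\mathrm{so}(\p)$ is abelian (so $[\h,\h]_\h\subset\ker\phi_\h$), whereas you argue directly that $\mathrm{ad}_B$ has purely imaginary spectrum; the subsequent computation with $a_0,c_0$ in the paper is precisely your relation $F\circ D=\alpha F$ written in Riesz coordinates.
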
\begin{proof}	 Note first that from the third relation in \eqref{eqpro} we get that $\phi_\h(\h)$ is a solvable subalgebra of $\mathrm{so}(\p)$ and hence must be abelian. Since $\h$ is 2-dimensional non abelian then $\dim\phi_\h(\h)=1$ and $[\h,\h]\subset\ker\phi_\h$. So there exists an orthonormal basis $(e_1,e_2)$ of $\h$ such that $[e_1,e_2]_\h=\al e_1$,  
		 $\phi_\h(e_1)=0$ and $\om=\be e_1^*\wedge e_2^*$. If we identify the endomorphisms of $\h$ with their matrices in the basis $(e_1,e_2)$, we get that $\mathrm{sp}(\h,\om)=\mathrm{sl}(2,\R)$ and there exists $a_0,b_0,c_0\in\p$ such that, for any $a\in\p$,
	\[ \phi_\p(a)=\left(\begin{matrix}
	\va_\p(a_0,a)&\va_\p(b_0,a)\\\va_\p(c_0,a)&
	-\va_\p(a_0,a)
	\end{matrix} \right). \]The first equation in  \eqref{eqpro} is equivalent to
	\begin{eqnarray*}
	\al\left( \va_\p(a_0,a) e_1+\va_\p(c_0,a) e_2 \right)&=&
	-\al\va_\p(a_0,a) e_1+\al\va_\p(a_0,a) e_1+
	\va_\p( a_0,\phi_\h(e_2)(a)) e_1+\va_\p(c_0,\phi_\h(e_2)(a)) e_2,
	\end{eqnarray*} for any $a\in\p$. Since $\phi_\h(e_2)$ is sekw-symmetric, this is equivalent to
	\[ \phi_\h(e_2)(a_0)=-\al a_0\esp \phi_\h(e_2)(c_0)=-\al c_0.  \]
	This implies that $a_0=c_0=0$. The second equation in \eqref{eqpro} implies that $\phi_\h(e_2)$ is a derivation of $\br_\p$. If we take $u=e_1$ in the forth equation in \eqref{eqpro}, we get that $[e_1,\mu(a,b)]=0$, for any $a,b\in\p$ and hence $\mu(a,b)=\mu_0(a,b)e_1$. If we take $u=e_2$ in the forth equation in \eqref{eqpro} we get \eqref{eqh}. The  two last equations are equivalent to $\br_\p$ is a Lie bracket and $\mu_0$ is 2-cocycle of $(\p,\br_\p)$.
	\end{proof}
		The following proposition gives the solutions of Problem \ref{pro1} when $\h$ is 2-dimensional  abelian and $\dim\p=2$.
	\begin{prop}\label{ab} Let $((\h,\om,\va_\h),(\p,\br_\p,\va_\p),\mu,\phi_\h,\phi_\p)$ be a solution of Problem \ref{pro1} with $\h$ is 2-dimensional  abelian and $\dim\p=2$. Then one of the following situations occurs:

		\begin{enumerate}
			\item $\phi_\h=0$, $(\p,\br_\p,\va_\p)$ is a 2-dimensional Euclidean Lie algebra, there exists $a_0\in\p$ and $D\in\mathrm{sp}(\h,\om)$ such that,  for any $a\in\p$, $\phi_\p(a)=\va_\p(a_0,a) D$  and there is no restriction on $\mu$. Moreover, $a_0\in[\p,\p]_\p^\perp$ if $D\not=0$.
			\item $\phi_\h=0$, $(\p,\br_\p,\va_\p)$ is a 2-dimensional non abelian Euclidean Lie algebra, $\phi_\p$ identifies $\p$ to a two dimensional subalgebra of $\mathrm{sp}(\h,\om)$ and there is no restriction on $\mu$.
			\item $(\p,\br_\p,\va_\p)$ is a Euclidean abelian Lie algebra and there exists an orthonormal basis $\B=(e_1,e_2)$ of $\h$ and $b_0\in\p$ such that $\om=\al e_1^*\wedge e_2^*$, $\phi_\h(e_1)=0$, $\phi_\h(e_2)\not=0$  and, for any $a\in\p$, $M(\phi_\p(a),\B)=\left(\begin{matrix}
			0&\va_\p( b_0,a)\\0&0
			\end{matrix}  \right)$ and there is no restriction on $\mu$.
		\end{enumerate}

		\end{prop}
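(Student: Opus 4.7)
The plan is to split the analysis according to whether $\phi_\h$ vanishes, exploiting two dimensional simplifications: $\mathrm{so}(\p)$ is one-dimensional, and $\wedge^3\p^*=0$ so every alternating trilinear form on $\p$ vanishes.

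I will first observe that the fifth and sixth relations in \eqref{eqpro} are automatic. The Jacobi identity for $\br_\p$ holds trivially on a two-dimensional space. Moreover, the cyclic sums appearing on both sides of these two equations, viewed as trilinear maps in $(a,b,c)$, are alternating (a consequence of the skew-symmetry of $\mu$ and of $\br_\p$), hence vanish identically because $\wedge^3\p^*=0$. Only the first four relations of \eqref{eqpro} remain to analyze.

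\emph{Case A: $\phi_\h=0$.} Relations one, two and three collapse to tautologies, and relation four reduces to the requirement that $\phi_\p:\p\to\mathrm{sp}(\h,\om)$ be a Lie algebra morphism; here $\mathrm{sp}(\h,\om)\cong\mathrm{sl}(2,\R)$ since $\om$ is a nondegenerate $2$-form on a two-dimensional space. I then discuss $\dim\phi_\p(\p)$. If $\dim\phi_\p(\p)\leq 1$, I can write $\phi_\p(a)=\va_\p(a_0,a)D$ for some $a_0\in\p$ and $D\in\mathrm{sp}(\h,\om)$; the morphism identity becomes $\va_\p(a_0,[a,b]_\p)D=0$, forcing $a_0\in[\p,\p]_\p^\perp$ whenever $D\neq 0$. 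This produces case~(1). If $\dim\phi_\p(\p)=2$, then $\phi_\p$ is injective and identifies $\p$ with a two-dimensional subalgebra of $\mathrm{sl}(2,\R)$, which is necessarily non-abelian, so $\p$ is non-abelian. This produces case~(2).

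\emph{Case B: $\phi_\h\neq 0$.} Since $\dim\mathrm{so}(\p)=1$, $\ker\phi_\h$ is one-dimensional, so I choose an orthonormal basis $(e_1,e_2)$ of $\h$ with $\phi_\h(e_1)=0$ and set $J:=\phi_\h(e_2)\neq 0$; nondegeneracy of $\om$ forces $\om=\al\,e_1^*\wedge e_2^*$ for some $\al\neq 0$. Specializing relation one to $u=e_1,v=e_2$ yields $\phi_\p(J(a))(e_1)=0$ for all $a\in\p$. Because $J$ is a nonzero skew-symmetric endomorphism of the two-dimensional Euclidean space $\p$ it is invertible, hence $\phi_\p(b)(e_1)=0$ for every $b\in\p$. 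Combined with $\phi_\p(b)\in\mathrm{sp}(\h,\om)\cong\mathrm{sl}(2,\R)$, this pins $\phi_\p(b)$ to strictly upper triangular form in $(e_1,e_2)$, yielding a unique $b_0\in\p$ with the matrix stated in case~(3). Relation two at $u=e_2$ reduces to $J([a,b]_\p)=[a,Jb]_\p+[Ja,b]_\p$, the $\phi_\h$--corrections vanishing since $\phi_\p(\cdot)(e_2)\in\R e_1\subset\ker\phi_\h$; working in a basis $(f_1,f_2)$ of $\p$ with $Jf_1=\lambda f_2$ and writing $[f_1,f_2]_\p=cf_1+df_2$, a direct expansion of both sides forces $c=d=0$, so $\p$ is abelian. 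Finally, relation four at $u=e_2$ becomes $\mu(a,Jb)+\mu(Ja,b)=0$, a condition I will verify holds automatically on a two-dimensional $\p$ for any skew-symmetric $\mu$ by evaluating on the pair $(f_1,f_2)$; hence $\mu$ is unrestricted, producing case~(3).

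The main technical step is the derivation in Case B that $\p$ must be abelian: the simultaneous existence of a nonzero skew-symmetric derivation of $\br_\p$ on a two-dimensional Euclidean space forces the bracket to vanish. The remaining verifications amount to a mechanical unpacking of \eqref{eqpro} leveraging $\dim\p=\dim\h=2$.
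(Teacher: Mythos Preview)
Your proof is correct and follows essentially the same approach as the paper: split on whether $\phi_\h$ vanishes, use that $\dim\p=2$ kills the last two equations of \eqref{eqpro}, in Case~A reduce to $\phi_\p$ being a representation into $\mathrm{sl}(2,\R)$ and discuss the rank, and in Case~B use relation one to force $\phi_\p$ strictly upper triangular and relation two to force $\p$ abelian via the invertibility of $J=\phi_\h(e_2)$. The only cosmetic differences are that you organize Case~A by $\dim\phi_\p(\p)$ rather than by whether $\p$ is abelian, and you verify relation four explicitly where the paper simply asserts the remaining equations hold; both are harmless.
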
 
		\begin{proof} Note first that since $\dim\p=2$ the last two equations in \eqref{eqpro} hold obviously and $(\p,\br_\p)$ is a Lie algebra.
			We distinguish two cases:
			\begin{enumerate}
				\item[$(i)$] $\phi_\h=0$. Then \eqref{eqpro} is equivalent to 
				$\phi_\p$ is a representation of $\p$ in $\mathrm{sp}(\h,\om)\simeq\mathrm{sl}(2,\R)$. Since $\mathrm{sl}(2,\R)$ doesn't contain any abelian two dimensional subalgebra, if $\p$ is an abelian Lie algebra   then $\dim\phi_\p(\p)\leq1$ and the first situation occurs. If $\p$ is not abelian then the first or the second situation occurs depending on $\dim\phi_\p(\p)$.
				\item[$(ii)$] $\phi_\h\not=0$.   Since $\dim\mathrm{so}(\p)=1$ there exists an orthonormal basis $\B=(e_1,e_2)$ of $\h$ such that $\phi_\h(e_1)=0$ and $\phi_\h(e_2)\not=0$. We have $\mathrm{sp}(\h,\om)=\mathrm{sl}(2,\R)$ and hence, for any $a\in\p$,
				$M(\phi_\p(a),\B)=\left(\begin{matrix}
				\va_\p (a_0,a)&\va_\p (b_0,a)\\\va_\p (c_0,a)&-\va_\p (a_0,a)
				\end{matrix}  \right)$. Choose an orthonormal basis $(a_1,a_2)$ of $\p$. Then there exists $\la\not=0$ such that $\phi_\h(e_2)(a_1)=\la a_2$ and $\phi_\h(e_2)(a_2)=-\la a_1$.

				The first equation in \eqref{eqpro} is equivalent to 
				\[ \phi_\p(\phi_\h(e_2)(a))(e_1)=0,\quad a\in\p. \]This is equivalent to
				\[ \phi_\p(a_1)(e_1) =\phi_\p(a_2)(e_1)=0.\]
				Then $a_0=c_0=0$ and hence $\phi_\p(a)=\left(\begin{matrix}
				0&\va_\p (b_0,a)\\0&0
				\end{matrix}  \right)$. The second equation in \eqref{eqpro} gives
				\[ \phi_\h(e_2)([a_1,a_2]_\p)=[a_1,\phi_\h(e_2)(a_2)]_\p+[\phi_\h(e_2)(a_1),a_2]_\p+\phi_\h(\phi_\p(a_2)(e_2))(a_2)-\phi_\h(\phi_\p(a_1)(e_2))(a_2), \]and hence $\phi_\h(e_2)([a_1,a_2]_\p)=0$. Thus $[a_1,a_2]_\p=0$. All the other equations in \eqref{eqpro} hold obviously.\qedhere
				
			\end{enumerate}

			\end{proof}
			
			To tackle the last case, we need the determination of 2-dimensional subalgebras of $\mathrm{sl}(2,\R)$.
			\begin{prop}\label{sl} The 2-dimensional subalgebras of $\mathrm{sl}(2,\R)$ are
				\[ \G_1=\left\{ \left(\begin{matrix}
				\al&\be\\0&-\al
				\end{matrix}  \right),\al,\be\in\R   \right\},\G_2=\left\{ \left(\begin{matrix}
				\al&0\\\be&-\al
				\end{matrix}  \right),\al,\be\in\R     \right\},\G_{x}=\left\{ \left(\begin{matrix}
				\al&\frac{2\be-\al}x\\(\al+2\be)x&-\al
				\end{matrix}  \right),\al,\be\in\R   \right\} \]where $x\in\R\setminus\{0\}$. Moreover,  $\G_x=\G_y$ if and only if $x=y$.
				
			\end{prop}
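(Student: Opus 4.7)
The plan is to identify every two-dimensional subalgebra $\fr{a}\subset\mathrm{sl}(2,\R)$ with the stabilizer of a line in $\R^2$, enumerate such lines via the standard affine chart on $\mathbb{P}^1(\R)$, and read off each stabilizer by a short matrix calculation.

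The first step, which I expect to be the main obstacle, is to show that every such $\fr{a}$ must stabilize a line. I would begin by ruling out the abelian case: in $\mathrm{sl}(2,\R)$ the centralizer of any nonzero element is one-dimensional --- by diagonalization in the semisimple case and by direct inspection of the Jordan form in the nilpotent case --- so no two linearly independent elements commute and $\fr{a}$ cannot be abelian. Being two-dimensional and solvable, $[\fr{a},\fr{a}]$ is a line $\R Y$, and after rescaling a complement one obtains $X\in\fr{a}$ with $[X,Y]=Y$. The operator $\ad_X$ on $\mathrm{sl}(2,\R)$ has eigenvalues $\{2\mu,0,-2\mu\}$ when $X$ is semisimple with eigenvalues $\pm\mu$, and $\{0,0,0\}$ when $X$ is nilpotent; the presence of the eigenvalue $1$ therefore forces $X$ to be semisimple with real eigenvalues $\pm\tfrac{1}{2}$ and $Y$ to be a corresponding root vector. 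Then $X$ is $\R$-diagonalizable on $\R^2$, its $+\tfrac{1}{2}$-eigenline $\ell$ is annihilated by $Y$, and both $X$ and $Y$ stabilize $\ell$; since the full stabilizer of any line in $\mathrm{sl}(2,\R)$ is itself two-dimensional, $\fr{a}$ equals it.

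Second, I would enumerate the lines of $\R^2$ as $\R e_1$, $\R e_2$, or $\R(e_1+xe_2)$ for a unique $x\in\R\setminus\{0\}$, and compute the stabilizer in $\mathrm{sl}(2,\R)$ in each case. The lines $\R e_1$ and $\R e_2$ produce $\G_1$ and $\G_2$ immediately. For $\ell_x=\R(e_1+xe_2)$ with $x\neq 0$, writing $M=\left(\begin{smallmatrix}a&b\\c&-a\end{smallmatrix}\right)$ the stability condition $M(e_1+xe_2)\in\R(e_1+xe_2)$ reduces to $c=2ax+bx^2$; reparametrizing by $a=\al$ and $b=(2\be-\al)/x$ yields $c=(\al+2\be)x$, which gives exactly the matrices listed in $\G_x$. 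Finally, the line $\ell$ is an invariant of $\fr{a}$, since it coincides with the image of any nonzero element of $[\fr{a},\fr{a}]$ regarded as a nilpotent operator on $\R^2$; hence distinct lines yield distinct subalgebras, and in particular $\G_x=\G_y$ implies $\ell_x=\ell_y$ and thus $x=y$.
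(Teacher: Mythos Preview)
Your argument is correct, but it proceeds along a genuinely different line from the paper. The paper fixes the standard basis $(h,e,f)$ and splits into the cases $h\in\g$ and $h\notin\g$: in the first, the eigenvalues of $\ad_h$ force $\g=\G_1$ or $\G_2$; in the second, it chooses a basis $(u,v)$ of $\g$ with $[u,v]=u$, completes to a basis $(u,v,h)$ of $\mathrm{sl}(2,\R)$, writes down the structure constants, and solves the resulting nine scalar equations for the coordinates of $u$ and $v$, from which the parameter $x=4x_1$ emerges. Your approach instead recognises every $2$-dimensional subalgebra as the Borel stabilizer of a line in $\R^2$: the ruling-out of the abelian case and the analysis of the spectrum of $\ad_X$ replace the paper's coordinate system by a single structural observation, and the enumeration over $\mathbb{P}^1(\R)$ replaces the paper's equation-solving. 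What your route buys is a transparent proof of the injectivity statement $\G_x=\G_y\Rightarrow x=y$, since the line $\ell=\mathrm{im}\,Y$ is an intrinsic invariant of $\fr a$, whereas the paper simply asserts this is an easy check; what the paper's route buys is that it is entirely self-contained from the bracket table, with no appeal to centralizer dimensions or the semisimple/nilpotent dichotomy.
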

			
			\begin{proof} Let $\g$ be a 2-dimensional subalgebra of $\mathrm{sl}(2,\R)$.   We consider the basis $\B=(h,e,f)$ of $\mathrm{sl}(2,\R)$ given by
				\[ e=\begin{pmatrix}0&1\\0&0\end{pmatrix},\ 
				f=\begin{pmatrix}
				0&0\\1&0
				\end{pmatrix}
				\ {\rm and}\ \ h=\begin{pmatrix}
				1&0\\0&-1
				\end{pmatrix}.\]Then
				\[ [h,e]=2e,\;[h,f]=-2f\esp [e,f]=h. \]
				If $h\in\g$ then $\ad_h$ leaves $\g$ invariant. But $\ad_h$ has three eigenvalues $(0,2,-2)$ with the associated eigenvectors $(h,e,f)$ and hence it restriction to $\g$ has $(0,2)$ or $(0,-2)$ as eigenvalues. Thus $\g=\g_1$ or $\g=\g_2$.
				
				Suppose now that $h\notin\g$. By using the fact that $\mathrm{sl}(2,\R)$ is unimodular, i.e., for any $w\in\mathrm{sl}(2,\R)$ $\tr(\ad_w)=0$,  we can choose a basis $(u,v)$ of $\g$ such that $(u,v,h)$ is a basis of $\mathrm{sl}(2,\R)$ and 
				\[ [u,v]=u,\;[h,u]=au+v\esp [h,v]=du-av-h. \]
				If $(x_1,x_2,x_3)$ and $(y_1,y_2,y_3)$ are the coordinates of $u$ and $v$ in $\B$, the brackets above gives
				\[ \begin{cases}
				-2(x_1y_3-x_3y_1)-x_1=0,\\ 2(x_2y_3-x_3y_2)-x_2=0,\\ x_1y_2-x_2y_1-x_3=0,\end{cases}\begin{cases}y_1=(2-a)x_1,\\ y_2=-(a+2)x_2,\\ y_3=-ax_3,\end{cases}\esp \begin{cases}dx_1= (a+2)y_1,\\ dx_2=(a-2)y_2,\\ dx_3=ay_3+1 .
				\end{cases} \]Note first that if $x_1=0$ then $(x_2,x_3)=(0,0)$ which impossible so we must have $x_1\not=0$ and hence $d=4-a^2$. If we replace in the third equation in the second system and the last equation, we get $x_3=\frac14$ and $y_3=-\frac{a}4$. The third equation in the first system gives $x_2=-\frac{1}{16x_1}$ and hence $y_1=(2-a)x_1$ and $y_2=\frac{(a+2)}{16x_1}$. Thus
				\[ \g=\mathrm{span}\left\{ \left( \begin{matrix}
				\frac14&-\frac{1}{16x_1}\\x_1&-\frac14
				\end{matrix}   \right), \left( \begin{matrix}
				-\frac{a}4&\frac{(a+2)}{16x_1}\\(2-a)x_1&\frac{a}4
				\end{matrix}   \right)   \right\}=\mathrm{span}\left\{ \left( \begin{matrix}
				1&-\frac{1}{x}\\x&-1
				\end{matrix}   \right), \left( \begin{matrix}
				-{a}&\frac{(a+2)}{x}\\(2-a)x&{a}
				\end{matrix}   \right)   \right\};\quad x=4x_1. \]But
				\[ \left( \begin{matrix}
				0&\frac{2}{x}\\2x&0
				\end{matrix}   \right)=a\left( \begin{matrix}
				1&-\frac{1}{x}\\x&-1
				\end{matrix}   \right)+\left( \begin{matrix}
				-{a}&\frac{(a+2)}{x}\\(2-a)x&{a}
				\end{matrix}   \right) \]and hence
				\[ \g=\mathrm{span}\left\{ \left( \begin{matrix}
				1&-\frac{1}{x}\\x&-1
				\end{matrix}   \right), \left( \begin{matrix}
				0&\frac{2}{x}\\2x&0
				\end{matrix}   \right)   \right\}=\g_x. \]One can check easily that $\g_x=\g_y$ if and only if $x=y$. This completes the proof.
			\end{proof}

			The following two propositions give the solutions of Problem \ref{pro1} when $\h$ is 2-dimensional  abelian and $\dim\p=3$.
			\begin{prop}\label{ab3} Let $((\h,\om,\va_\h),(\p,\br_\p,\va_\p),\mu,\phi_\h,\phi_\p)$ be a solution of Problem \ref{pro1} with $\h$ is 2-dimensional  abelian and $\dim\p=3$ and $\phi_\h=0$. Then one of the following situations occurs:
				\begin{enumerate}
					\item[$(i)$] $(\p,\br_\p,\va_\p)$ is 3-dimensional Euclidean Lie algebra, $\phi_\p=0$ and $\mu$ is 2-cocycle for the trivial representation.
					\item[$(ii)$]  $\phi_\p$ is an isomorphism of Lie algebras between $(\p,\br_\p)$ and $\mathrm{sl}(2,\R)$ and there exists an endomorphism $L:\p\too\h$ such that for any $a,b\in\p$,
					\[ \mu(a,b)=\phi_\p(a)(L(b))-\phi_\p(b)(L(a))-L([a,b]_\p). \]
					\item[$(iii)$] There exists a basis $\B_\p=(a_1,a_2,a_3)$ of $\p$, $\al\not=0$, $\be\not=0$, $\ga,\tau\in\R$ such that $\br_\p$ has one of the two following forms
					\[ \begin{cases}
					[a_1,a_2]_\p=0,\;[a_1,a_3]_\p=\be a_1,\\ [a_2,a_3]_\p=\ga a_1+\al a_2,\;\al\not=0,\be\not=0\\
					M(\va_\p,\B_\p)=\mathrm{I}_3
					\end{cases}\quad\mbox{or}\quad
					\begin{cases}
					[a_1,a_2]_\p=[a_1,a_3]_\p=0,\\ [a_2,a_3]_\p=\al a_2,\;\al\not=0,\\
					M(\va_\p,\B_\p)=\left( \begin{matrix}1&\tau&0\\\tau&1&0\\0&0&1
					\end{matrix}   \right).
					\end{cases}
					 \]
					In both cases, there exists an orthonormal basis $\B_\h=(e_1,e_2)$ of $\h$, $x\not=0$, $u\not=0$ and $v\in\R$ such that $\phi_\p$ has one of the following forms
					\[ \begin{cases}
					M(\phi_\p(a_2),\B_\h)=\left(\begin{matrix}
					0&u\\0&0
					\end{matrix}  \right), \\\M(\phi_\p(a_3),\B_\h)= \left(\begin{matrix}
					-\frac{\al}{2}&v\\0&\frac{\al}{2}
					\end{matrix}  \right),\\ \phi_\p(a_1)=0,
					\end{cases}\;\begin{cases}
					M(\phi_\p(a_2),\B_\h)=\left(\begin{matrix}
					0&0\\u&0
					\end{matrix}  \right),\\ \;M(\phi_\p(a_3),\B_\h)= \left(\begin{matrix}
					\frac{\al}{2}&0\\ v&-\frac{\al}{2}
					\end{matrix}  \right),\\\phi_\p(a_1)=0,
					\end{cases}\;\mbox{or}\;\begin{cases}
					M(\phi_\p(a_2),\B_\h)=\left(\begin{matrix}
					u&-\frac{u}x\\ux&-u
					\end{matrix}  \right),\\\; M(\phi_\p(a_3),\B_\h)= \left(\begin{matrix}
					v&-\frac{2v+\al}{2x}\\\frac{2v-\al}{2}x&-v
					\end{matrix}  \right),\\\phi_\p(a_1)=0.
					\end{cases} \]
					Moreover, $\mu$ is a 2-cocycle for $(\p,\br_\p,\phi_\p)$.
				\item[$(iv)$] There exists an orthonormal basis $\B=(a_1,a_2,a_3)$ of $\p$ such that $\phi_\p(a_1)=\phi_\p(a_2)=0$, $\phi_\p(a_3)$ is a non zero element of $\mathrm{sp}(\h,\om)$ and
				\[ \begin{cases}
				[a_1,a_2]_\p=0,\;[a_1,a_3]_\p=\be a_1+\rho a_2,\\ [a_2,a_3]_\p=\ga a_1+\al a_2,\\
				\end{cases}\quad\mbox{or}\quad
				\begin{cases}
				[a_1,a_2]_\p=\al a_2,\;[a_1,a_3]_\p=\rho a_2,\\ [a_2,a_3]_\p=\ga a_2,\al\not=0.
				\end{cases}
				\]Moreover, $\mu$ is a 2-cocycle for $(\p,\br_\p,\phi_\p)$.

				\end{enumerate}
				
			\end{prop}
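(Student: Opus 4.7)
The plan is to first reduce the system \eqref{eqpro} to a simple algebraic datum, then to carry out a case-by-case analysis on $k:=\dim\phi_\p(\p)\in\{0,1,2,3\}$. Under the standing hypotheses ($\h$ abelian of dimension two and $\phi_\h=0$), the first three equations in \eqref{eqpro} hold automatically, the fifth reduces to the Jacobi identity of $\br_\p$, the fourth becomes
\[ \phi_\p([a,b]_\p)=[\phi_\p(a),\phi_\p(b)], \]
and the sixth becomes the $\phi_\p$-cocycle identity for $\mu$. Since $\om$ is a non-degenerate $2$-form on a two-dimensional space, $\mathrm{sp}(\h,\om)\cong\mathrm{sl}(2,\R)$, so a solution of Problem \ref{pro1} amounts to a $3$-dimensional Euclidean Lie algebra $(\p,\br_\p,\va_\p)$, a Lie algebra morphism $\phi_\p:\p\to\mathrm{sl}(2,\R)$, and an arbitrary $\phi_\p$-$2$-cocycle $\mu$; the four situations of the statement will correspond to $k=0,3,2,1$ respectively.

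The extremes are immediate. If $k=0$ then $\phi_\p=0$ and $\mu$ is a $2$-cocycle for the trivial action, giving $(i)$. If $k=3$ then $\phi_\p$ is a Lie algebra isomorphism $\p\cong\mathrm{sl}(2,\R)$, and Whitehead's second lemma (applicable since $\mathrm{sl}(2,\R)$ is semisimple) yields $H^2(\mathrm{sl}(2,\R),\h)=0$; hence $\mu$ is a coboundary, which is the content of $(ii)$ after choosing a primitive $L:\p\to\h$.

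For $k=2$, set $\mathfrak{k}=\ker\phi_\p$, a $1$-dimensional ideal of $\p$, and note that $\p/\mathfrak{k}\cong\phi_\p(\p)$ is one of the non-abelian subalgebras $\G_1,\G_2,\G_x$ from Proposition \ref{sl}. Choosing a generator $a_1$ of $\mathfrak{k}$ and lifting a standard basis of the quotient to $(a_2,a_3)$, the Jacobi identity on $(a_1,a_2,a_3)$ forces $[a_1,a_2]_\p=0$ and leaves the brackets $[a_1,a_3]_\p=\be a_1$, $[a_2,a_3]_\p=\ga a_1+\al a_2$ with $\al\neq 0$ as the only remaining structure constants. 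When $\be\neq 0$, a suitable shift of $a_2$ absorbs $\ga$ while $a_1$ can still be rescaled to unit norm orthogonally to $(a_2,a_3)$, giving the first form of $(iii)$. When $\be=0$ (i.e., when $\mathfrak{k}$ is central), the analogous shift again kills $\ga$ but now destroys the orthogonality of $a_1$ with $a_2$, and after a final unit-norm rescaling the Gram matrix acquires exactly the single parameter $\tau$ of the second model of $(iii)$. The three matrix forms for $\phi_\p(a_2),\phi_\p(a_3)$ then come from writing the three subalgebras $\G_1,\G_2,\G_x$ of $\mathrm{sl}(2,\R)$ in an orthonormal basis of $(\h,\va_\h)$.

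For $k=1$, $\ker\phi_\p$ is a $2$-dimensional ideal that is either abelian or isomorphic to the two-dimensional non-abelian Lie algebra; taking an orthonormal basis $(a_1,a_2)$ of $\ker\phi_\p$ adapted to this Lie structure and completing it by a unit $a_3$ in its orthogonal complement with $\phi_\p(a_3)\neq 0$ yields the two bracket models in $(iv)$, and the only remaining datum $\phi_\p(a_3)$ may be any non-zero element of $\mathrm{sp}(\h,\om)$. The main technical obstacle is concentrated in case $k=2$: one has to adjust simultaneously the lift of a quotient basis of $\p/\mathfrak{k}$, the orthonormal normalization of $\va_\p$, and the embedding of $\phi_\p(\p)$ into $\mathrm{sl}(2,\R)$ via an orthonormal basis of $\h$, and the crucial subtlety is that when $\mathfrak{k}$ is central one cannot in general simplify the bracket and keep the basis of $\p$ orthonormal at the same time, which is exactly what forces the parameter $\tau$ in the Gram matrix of the second model of $(iii)$.
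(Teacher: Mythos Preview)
Your overall strategy is correct and matches the paper's: under $\h$ abelian and $\phi_\h=0$, the system \eqref{eqpro} collapses to ``$\phi_\p$ is a Lie-algebra morphism into $\mathrm{sp}(\h,\om)\cong\mathrm{sl}(2,\R)$ and $\mu$ is a $\phi_\p$-$2$-cocycle'', and one then splits on $k=\dim\phi_\p(\p)$. Your treatments of $k=0,3,1$ coincide with the paper's (which does not name Whitehead's lemma for $k=3$ but uses it implicitly).

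There is, however, a genuine slip in your $k=2$ argument. You claim that when $\be\neq 0$ a shift of $a_2$ ``absorbs $\ga$'' and gives the first form of $(iii)$, but the first form in the statement \emph{retains} $\ga$: it reads $[a_2,a_3]_\p=\ga a_1+\al a_2$ in an \emph{orthonormal} basis. In fact $\ga$ cannot be removed while keeping $(a_1,a_2,a_3)$ orthonormal with $a_1\in\ker\phi_\p$: once $a_1$ is the unit generator of $\ker\phi_\p$, the condition $[a_1,a_2]_\p=0$ pins $a_2$ down up to sign inside $a_1^\perp$ (because $\ad_{a_1}$ has rank one when $\be\neq 0$), so $\ga$ is an honest parameter. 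The paper handles this branch by not shifting at all: it simply starts from an orthonormal basis with $a_1\in\ker\phi_\p$, writes the brackets, and uses Jacobi to force $[a_1,a_2]_\p=0$. Your shift-by-$a_1$ trick is only appropriate in the second branch ($\ker\phi_\p$ central, i.e.\ your $\be=0$), where it does kill the $a_1$-component of $[a_2,a_3]_\p$ and, exactly as you say, produces the off-diagonal $\tau$ in the Gram matrix. So your dichotomy $\be\neq 0$ versus $\be=0$ is the right one (it agrees with the paper's $\ker\phi_\p\subset[\p,\p]_\p$ versus $\ker\phi_\p$ central), but in the first branch you should keep $\ga$ and the orthonormal basis rather than try to normalize $\ga$ away.
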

			
			\begin{proof}
					 In this case, \eqref{eqpro} is equivalent to $(\p,\br_\p)$ is a Lie algebra and $\phi_\p$ is a representation and $\mu$ is a 2-cocycle of $(\p,\br_\p,\phi_\p)$.
				
				 We distinguish four cases:
			\begin{enumerate}
				\item $\phi_\p=0$ and the case $(i)$ occurs.

			\item $\dim\phi_\p(\p)=3$ and hence $\p$ is isomorphic to $\mathrm{sp}(\h,\om)\simeq\mathrm{sl}(2,\R)$ and hence $\mu$ is a coboundary. Thus $(ii)$ occurs.

			\item 	$\dim\phi_\p(\p)=2$ then $\ker\phi_\p$ is a one dimensional ideal of $\p$. But $\phi_\p(\p)$ is a 2-dimensional subalgebra of $\mathrm{sp}(\h,\om)\simeq\mathrm{sl}(2,\R)$, therefore it is non abelian
			so $\p/\ker\p$ is non abelian.
			
			If $\ker\p\subset[\p,\p]_\p$ then $\dim[\p,\p]_\p=2$ so there exists an orthonormal basis $(a_1,a_2,a_3)$ of $\p$ such that $a_1\in\ker\p$ and
			\[ [a_1,a_2]_\p=\xi a_1,\;[a_1,a_3]_\p=\be a_1\esp [a_2,a_3]_\p=\ga a_1+\al a_2,\;\al\not=0,\be\not=0 \]and we must have $\xi=0$ in order to have the Jacobi identity.

			 If $\ker\p\nsubset[\p,\p]$ then $\ker\p\subset Z(\p)$ and $\dim[\p,\p]=1$.
			Then there exits a basis $(a_1,a_2,a_3)$ of $\p$ such that $a_1\in\ker\p$, $a_2\in [\p,\p]$, $a_3\in\{a_1,a_2\}^\perp$ and 
			\[ [a_2,a_3]_\p=\al a_2,\;[a_3,a_1]_\p= [a_1,a_2]_\p=0,\;\al\not=0.\]The matrix of $\va_\p$ in $(a_1,a_2,a_3)$ is given by
			\[ \left( \begin{matrix}1&\tau&0\\\tau&1&0\\0&0&1
			\end{matrix}   \right). \]

			We choose an orthonormal basis $(e_1,e_2)$ of $\h$ and identify $\mathrm{sp}(\h,\om)$ to $\mathrm{sl}(2,\R)$.
			 Now  $\phi_\p(\p)=\{\phi_p(a_2),\phi_p(a_3)  \}$ is a subalgebra of $\mathrm{sl}(2,\R)$ and, according to Proposition \ref{sl},  $\phi_\p(\p)=\G_1$,  $\G_2$ or $\G_x$. But
			 \[ [\G_1,\G_1]=\R e, [\G_2,\G_2]=\R f\esp [\G_x,\G_x]=\left\{ \left(\begin{matrix}
			 u&-\frac{u}x\\ux&-u
			 \end{matrix}  \right)\right\}.\] So in order for $\phi_\p$ to be a representation we must have
			 \[ \phi_\p(a_2)=\left(\begin{matrix}
			 0&u\\0&0
			 \end{matrix}  \right), \esp\phi_\p(a_3)= \left(\begin{matrix}
			 -\frac{\al}{2}&v\\0&\frac{\al}{2}
			 \end{matrix}  \right) \esp\phi_\p(a_1)=0,\]
			 \[ \phi_\p(a_2)=\left(\begin{matrix}
			 0&0\\u&0
			 \end{matrix}  \right), \;\phi_\p(a_3)= \left(\begin{matrix}
			 \frac{\al}{2}&0\\ v&-\frac{\al}{2}
			 \end{matrix}  \right)\esp\phi_\p(a_1)=0,\]
			 or

			\[ \phi_\p(a_2)=\left(\begin{matrix}
			u&-\frac{u}x\\ux&-u
			\end{matrix}  \right),\; \phi_\p(a_3)= \left(\begin{matrix}
			p&-\frac{2p+\al}{2x}\\\frac{2p-\al}{2}x&-p
			\end{matrix}  \right)\esp\phi_\p(a_1)=0.\]
			
			\item 	$\dim\phi_\p(\p)=1$ then $\ker\phi_\p$ is a two dimensional ideal of $\p$. Then there exists an orthonormal basis $(a_1,a_2,a_3)$ of $\p$ such that
			\[ [a_1,a_2]_\p=\al a_2,\;[a_3,a_1]_\p=p a_1+qa_2 \esp [a_3,a_2]_\p=r a_1+sa_2.\]
			The Jacobi identity gives $\al=0$ or $(p,r)=(0,0)$.
			We take $\phi_\p(a_1)=\phi_\p(a_2)=0$ and $\phi_\p(a_3)\in \mathrm{sl}(2,\R)$.\qedhere
		\end{enumerate}	
				\end{proof}		
				
		\begin{prop}\label{ab3b} Let $((\h,\om,\va_\h),(\p,\br_\p,\va_\p),\mu,\phi_\h,\phi_\p)$ be a solution of Problem \ref{pro1} with $\h$ is 2-dimensional  abelian,  $\dim\p=3$ and $\phi_\h\not=0$. Then there exists an orthonormal basis $(e_1,e_2)$ of $\h$, an orthonormal basis $(a_1,a_2,a_3)$ of $\p$, $\la>0$, $\al,p,q,\mu_1,\mu_2,\mu_3\in\R$ such that
			$$\phi_\h(e_1)=0,\;\phi_\h(e_2)(a_1)=\la a_2,\; \phi_\h(e_2)(a_2)=-\la a_1\esp \phi_\h(e_2)(a_3)=0,$$
			\[ [a_1,a_2]_\p=\al a_3,\;[a_1,a_3]_\p=pa_1+qa_2, [a_2,a_3]_\p=-qa_1+pa_2\esp \phi_\p(a_i)= \left( \begin{matrix}
			0&\mu_i\\0&0
			\end{matrix}  \right),i=1,2,3 \]
			and one of the following situations occurs:
			\begin{enumerate}
				\item[$(i)$]  $p\not=0$, $\al=0$ and 
				\[ \mu(a_1,a_2)=0,\;\mu(a_2,a_3)=-\la^{-1} (p\mu_1+q\mu_2)e_1\esp \mu(a_1,a_3)=\la^{-1}(-q\mu_1+p\mu_2)e_1. \]
				
				\item[$(ii)$] $p=0$,  $\mu_3\not=0$, $\al=0$ and
				\[\mu(a_1,a_2)=ce_1, \mu(a_2,a_3)=-\la^{-1} (p\mu_1+q\mu_2)e_1\esp \mu(a_1,a_3)=\la^{-1}(-q\mu_1+p\mu_2)e_1. \]
				\item[$(iii)$] $p=0$,  $\mu_3=0$ and 
				\[\mu(a_1,a_2)=c_1e_1+c_2e_2, \mu(a_2,a_3)=-\la^{-1} (p\mu_1+q\mu_2)e_1\esp \mu(a_1,a_3)=\la^{-1}(-q\mu_1+p\mu_2)e_1. \]
				 
			\end{enumerate}

		\end{prop}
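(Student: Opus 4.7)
My plan is to run the same mechanism used in Propositions \ref{ab} and \ref{ab3}: substitute the simplifications coming from $[\cdot,\cdot]_\h=0$ into \eqref{eqpro}, then exploit $\phi_\h\neq 0$ to fix adapted orthonormal bases of $\h$ and $\p$, and finally read off the constraints on $\phi_\p$, $\br_\p$ and $\mu$ one equation at a time. Since $[\phi_\h(u),\phi_\h(v)]=0$ by the third equation of \eqref{eqpro} and $\mathrm{so}(\p)\cong\mathrm{so}(3)$ has only $1$-dimensional abelian subalgebras, $\dim\phi_\h(\h)=1$. Choose an orthonormal basis $(e_1,e_2)$ of $\h$ with $\phi_\h(e_1)=0$ and $\phi_\h(e_2)\neq 0$, then diagonalize $\phi_\h(e_2)\in\mathrm{so}(\p)$ to obtain an orthonormal basis $(a_1,a_2,a_3)$ with $\phi_\h(e_2)(a_1)=\la a_2$, $\phi_\h(e_2)(a_2)=-\la a_1$, $\phi_\h(e_2)(a_3)=0$, $\la>0$.

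Next, I plug $u=e_1$ into the first equation of \eqref{eqpro}; using $\phi_\h(e_1)=0$, this forces $\phi_\p(\phi_\h(e_2)(a))(e_1)=0$ for every $a$, so $\phi_\p(a_1)(e_1)=\phi_\p(a_2)(e_1)=0$. Since $\mathrm{sp}(\h,\om)=\mathrm{sl}(2,\R)$, this already forces $\phi_\p(a_1)$ and $\phi_\p(a_2)$ to be strictly upper triangular in the basis $(e_1,e_2)$, giving the $\mu_1,\mu_2$ of the statement. I then feed $u=e_2$ into the second equation of \eqref{eqpro} with $(a,b)$ ranging over the pairs $(a_1,a_2)$, $(a_1,a_3)$, $(a_2,a_3)$. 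The pair $(a_1,a_2)$ yields $[a_1,a_2]_\p\in\ker\phi_\h(e_2)=\R a_3$, hence $[a_1,a_2]_\p=\al a_3$; the pair $(a_1,a_3)$ determines the $a_3$-component of $[a_1,a_3]_\p$ and the $\al_3$-diagonal entry of $\phi_\p(a_3)$; and the pair $(a_2,a_3)$ then forces $\al_3=0$, the vanishing of the $a_3$-components of $[a_1,a_3]_\p$ and $[a_2,a_3]_\p$, and the symmetric form $[a_1,a_3]_\p=pa_1+qa_2$, $[a_2,a_3]_\p=-qa_1+pa_2$. Using finally the second equation of \eqref{eqpro} with $u=e_1$ (where the left side vanishes) on $(a_1,a_3)$ eliminates the lower off-diagonal entry of $\phi_\p(a_3)$, so all three matrices $\phi_\p(a_i)$ have the claimed strictly upper-triangular shape with $(1,2)$-entry $\mu_i$.

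With $\br_\p$ and $\phi_\p$ pinned down, the Jacobi identity for $\br_\p$ reduces to the single relation $p\al=0$. Because all $\phi_\p(a_i)$ are nilpotent of a common rank-one type, the commutator $[\phi_\p(a),\phi_\p(b)]$ vanishes identically, so the fourth equation of \eqref{eqpro} with $u=e_2$ collapses to $\phi_\p([a,b]_\p)(e_2)=-\mu(a,\phi_\h(e_2)b)-\mu(\phi_\h(e_2)a,b)$; evaluating on the three pairs produces the relation $\al\mu_3=0$ and the explicit formulas for $\mu(a_1,a_3)$ and $\mu(a_2,a_3)$ displayed in the statement. The same equation with $u=e_1$ is automatic, the fifth equation of \eqref{eqpro} is automatic (its right side lies in $\phi_\h(\R e_1)=0$ after inserting the formulas just found), and the sixth equation of \eqref{eqpro} applied to $(a_1,a_2,a_3)$ gives, after writing $\mu(a_1,a_2)=c_1e_1+c_2e_2$, the two scalar conditions $c_2\mu_3=-2pc_1$ and $pc_2=0$.

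Splitting on the values of $p$ and $\mu_3$ then yields exactly the three cases: $p\neq 0$ (which, combined with $p\al=0$, forces $\al=0$, $c_2=0$, $c_1=0$, giving case $(i)$); $p=0$ and $\mu_3\neq 0$ (which forces $\al=0$ and $c_2=0$, giving case $(ii)$); $p=0$ and $\mu_3=0$ (no further restriction on $c_1,c_2$ or $\al$, giving case $(iii)$). The expected difficulty is not any one step but the bookkeeping: making sure that every equation of \eqref{eqpro} has been exhausted and that the case split is exclusive and exhaustive. The key simplifying observations that keep the computation manageable are that $\phi_\p(\p)$ consists of mutually commuting nilpotents and that $\phi_\h$ is supported on a single line in $\h$, which kills many terms before they appear.
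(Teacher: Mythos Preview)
Your plan is essentially the paper's own argument: choose adapted orthonormal bases using $\phi_\h(\h)\subset\mathrm{so}(\p)$ one-dimensional, then run through the equations of \eqref{eqpro} in order to pin down $\phi_\p$, $\br_\p$ and $\mu$, and finally split on $(p,\mu_3)$. The computations and the case split match the paper.

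One point to clean up: your handling of the fifth equation is muddled. In Problem~\ref{pro1} the map $\br_\p$ is only a skew-symmetric bilinear map, not a Lie bracket, so ``the Jacobi identity for $\br_\p$'' is not an independent hypothesis you can invoke; it is precisely what the fifth equation of \eqref{eqpro} encodes once the right side vanishes. Concretely, the right side is $\phi_\h(\mu(a_2,a_3))(a_1)+\phi_\h(\mu(a_3,a_1))(a_2)+\phi_\h(\mu(a_1,a_2))(a_3)$; the first two terms vanish because $\mu(a_i,a_3)\in\R e_1$ and $\phi_\h(e_1)=0$, while the third vanishes because $\phi_\h(e_2)(a_3)=0$ (note $\mu(a_1,a_2)$ itself need \emph{not} lie in $\R e_1$, so your parenthetical is slightly off). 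The left side is the Jacobiator of $\br_\p$, equal to $2p\alpha\,a_3$, so the fifth equation is not ``automatic'' but rather yields the constraint $p\alpha=0$. With that correction your derivation of $p\alpha=0$, $\alpha\mu_3=0$, $c_2\mu_3=-2pc_1$, $pc_2=0$ and the ensuing trichotomy is correct and coincides with the paper's.
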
	\begin{proof}	
				Since $\phi_\h\not=0$ then $\phi_\h(\h)$ is a non trivial abelian subalgebra of $\mathrm{so}(\p)$ and hence it must be  one dimensional. Then there exists an orthonormal basis $(e_1,e_2)$ of $\h$ and an orthonormal basis $(a_1,a_2,a_3)$ of $\p$ and $\la>0$ such that $\phi_\h(e_1)=0$ and
				 $$\phi_\h(e_2)(a_1)=\la a_2,\; \phi_\h(e_2)(a_2)=-\la a_1\esp \phi_\h(e_2)(a_3)=0.$$
				 The first equation in \eqref{eqpro} is equivalent to 
				 \[ \phi_\p(\phi_\h(e_2)(a))(e_1)=0,\quad a\in\p. \]This is equivalent to
				 \[ \phi_\p(a_1)(e_1) =\phi_\p(a_2)(e_1)=0.\]
				Thus $\phi_\p(a_i)=\left( \begin{matrix}
				0&\mu_i\\0&0
				\end{matrix}  \right)$ for $i=1,2$ and $\phi_\p(a_3)=\left(\begin{matrix}
				u&v\\w&-u
				\end{matrix}  \right)$. Consider now the second equation in \eqref{eqpro}
				\[ \phi_\h(u)([a,b]_\p)=[a,\phi_\h(u)(b)]_\p+[\phi_\h(u)(a),b]_\p+\phi_\h(\phi_\p(b)(u))(a)-\phi_\h(\phi_\p(a)(u))(b). \]
				This equation is obviously true when $u=e_1$ and $(a,b)=(a_1,a_2)$. For $u=e_1$ and $(a,b)=(a_1,a_3)$, we get
				\[ \phi_\h(\phi_\p(a_3)(e_1))(a_1)=0 \]and hence $w=0$.

				 For $u=e_2$ and $(a,b)=(a_1,a_2)$, we get
				$ \phi_\h(e_2)([a_1,a_2]_\p)=0$ and hence
				$[a_1,a_2]_\p=\al a_3$.
				
				For $u=e_2$ and $(a,b)=(a_1,a_3)$ or $(a,b)=(a_2,a_3)$ , we get
				\[ \phi_\h(e_2)([a_1,a_3]_\p)=\la [a_2,a_3]_\p-\la u a_2 \esp \phi_\h(e_2)([a_2,a_3]_\p)=-\la [a_1,a_3]_\p+\la u a_1.\]
				This implies that $[a_1,a_3]_\p,[a_2,a_3]_\p\in\mathrm{span}\{ a_1,a_2 \}$ and hence
				\[ [a_1,a_3]_\p=pa_1+qa_2\esp [a_2,a_3]_\p=ra_1+sa_2. \]So
				\[ \begin{cases}
				\la(pa_2-qa_1)=\la(ra_1+sa_2-ua_2),\\
				\la(ra_2-sa_1)=-\la(pa_1+qa_2-ua_1).
				\end{cases} \]This is equivalent to
				\[ u=0, p=s\esp r=-q. \]
				To summarize, we get
				\[ [a_1,a_2]_\p=\al a_3,\;[a_1,a_3]_\p=pa_1+qa_2, [a_2,a_3]_\p=-qa_1+pa_2\esp \phi_\p(a_i)=\left( \begin{matrix}
				0&\mu_i\\0&0
				\end{matrix}  \right). \]
				
				Let consider now the fourth equation in \eqref{eqpro}
				\[ \phi_\p([a,b]_\p)(u)=[\phi_\p(a),\phi_\p(b)](u)+[u,{\mu}(a,b)]_\h-{\mu}(a,\phi_\h(u)(b))-{\mu}(\phi_\h(u)(a),b).
				 \]This equation is obviously true for $u=e_1$.
				  
				 For $u=e_2$ and $(a,b)=(a_1,a_2)$, $(a,b)=(a_1,a_3)$ or $(a,b)=(a_2,a_3)$, we get
				 \[ \begin{cases}
				 \al\mu_3=0,\\ (p\mu_1+q\mu_2)e_1=-\la\mu(a_2,a_3),\\
				 (-q\mu_1+p\mu_2)e_1=\la\mu(a_1,a_3).
				 \end{cases} \] The last two equations are equivalent to
				 \[ \phi_\p(a_3)(\mu(a_1,a_2))=-2p\mu(a_1,a_2)\esp p[a_1,a_2]_\p=0. \]
				 $\bullet$ $p\not=0$ then 
				 \[ \al=0,\mu(a_1,a_2)=0,\;\mu(a_2,a_3)=-\la^{-1} (p\mu_1+q\mu_2)e_1\esp \mu(a_1,a_3)=\la^{-1}(-q\mu_1+p\mu_2)e_1. \]
				 
				 $\bullet$ $p=0$ and $\mu_3\not=0$ then $\al=0$ and
				 \[\mu(a_1,a_2)=ce_1, \mu(a_2,a_3)=-\la^{-1} (p\mu_1+q\mu_2)e_1\esp \mu(a_1,a_3)=\la^{-1}(-q\mu_1+p\mu_2)e_1. \]
				 $\bullet$ $p=0$ and $\mu_3=0$ then 
				 \[\mu(a_1,a_2)=c_1e_1+c_2e_2, \mu(a_2,a_3)=-\la^{-1} (p\mu_1+q\mu_2)e_1\esp \mu(a_1,a_3)=\la^{-1}(-q\mu_1+p\mu_2)e_1. \]

			\end{proof}

			 By using Propositions \ref{p1}-\ref{ab3b}, we can give all the Riemann-Poisson Lie algebras of dimension 3, 4 or 5.

			 Let $(\G,\br,\va,r)$ be a Riemann-Poisson Lie algebra of dimension less or equal to 5. According to what above then $\G=\h\oplus\p$ and the Lie bracket on $\G$ is given by \eqref{bra} and $((\h,\om,\va_\h),(\p,\br_\p,\va_\p),\mu,\phi_\h,\phi_\p)$ are solutions  of Problem \ref{pro1}.
			 
			 $\bullet$ $\dim\G=3$. In this case $\dim\h=2$ and $\dim\p=1$ and, by applying Proposition \ref{p1}, the Lie bracket of $\G$, $\va$ and $r$ are given  in Table \ref{1}, where $e^{12}=e_1\wedge e_2$.

			 			 \begin{center}
			 	\begin{tabular}{|l|l|l|l|}
			 		\hline
			 		Non vanishing Lie brackets& Bivector $r$&Matrix of $\va$&Conditions\\
			 	\hline
			 $[e_1,e_2]=ae_1, [e_3,e_2]=b e_1$	&$\al e^{12}$&$\mathrm{I}_3$&$a\not=0,\al\not=0$\\
			 	\hline
			 $[e_3,e_1]=-b e_1+ce_2, [e_3,e_2]=d e_1+be_2$	&$\al e^{12}$&$\mathrm{I}_3$&$\al\not=0$\\
			 	\hline	
			 \end{tabular}
			 \captionof{table}{\label{1}Three dimensional Riemann-Poisson Lie algebras}
			 \end{center}
			 
			 $\bullet$ $\dim\G=4$. We have three cases:
			 \begin{enumerate}
			 	\item[$(c41)$] $\dim\h=2$, $\dim\p=2$ and $\h$ is non abelian and we can apply  Proposition \ref{nab} to get the Lie brackets on $\G$, $\va$ and $r$. They are described in rows 1 and 2 in Table \ref{2}.
			 	\item[$(c42)$] $\dim\h=2$, $\dim\p=2$ and $\h$ is  abelian and we can apply  Propositions \ref{ab} and \ref{sl} to get the Lie brackets on $\G$, $\va$ and $r$. They are described in rows 3 and 8 in Table \ref{2}.
			 	\item[$(c43)$] $\dim\h=4$. In this case $\G$ is a K\"ahler Lie algebra. We have used \cite{ovando} to derive all four dimensional K\"ahler Lie algebra together with their symplectic derivations. The results are given in Table \ref{3}. The notation $\mathrm{Der}^s(\h)$ stands for the  vector spaces of derivations which are skew-symmetric with respect the symplectic form. The vector space $\mathrm{Der}^s(\h)$ is described by a family of generators and $E_{ij}$ is the matrix with 1 in the $i$ row and $j$ column and 0 elsewhere. 
			 \end{enumerate}
			 
			  \begin{center}
			  	\begin{tabular}{|l|l|l|l|}
			  		\hline
			  		Non vanishing Lie brackets& Bivector $r$&Matrix of $\va$&Conditions\\
			  		\hline
			  		$[e_1,e_2]=ae_1,\;[e_3,e_2]=be_1+ce_4,$ 	&$\al e^{12}$&$\mathrm{I}_4$&$a\not=0,\al\not=0$\\
			  		$[e_4,e_2]=d e_1-ce_3$&&&\\
			  		\hline
			  		$[e_1,e_2]=ae_1,\;[e_3,e_2]=be_1,$	&$\al e^{12}$&$\mathrm{I}_4$&$\al ac\not=0,$\\
			  		 $[e_4,e_2]=d e_1,[e_3,e_4]=ce_3-a^{-1}cbe_1$&&&\\
			  		\hline
			  		$[e_3,e_4]=ae_1+be_2$	&$\al e^{12}$&$\mathrm{I}_4$&$\al\not=0$\\
			  		
			  		\hline
			  		$[e_3,e_4]=ae_1+be_2+ce_3,\;[e_4,e_1]=xe_1+ye_2, $	&$\al e^{12}$&$\mathrm{I}_4$&$\al\not=0$\\
			  		$[e_4,e_2]=ze_1-xe_2$&&&\\
			  		\hline
			  		$[e_3,e_4]=ae_1+be_2+2e_4,\;[e_3,e_1]=e_1,$	&$\al e^{12}$&$\mathrm{Diag}\left(1,1,\left(\begin{matrix}
			  		\mu&\nu\\\nu&\rho
			  		\end{matrix}\right)\right)$&$\al\not=0,\mu,\rho>0$\\
			  		$[e_3,e_2]=-e_2,[e_4,e_2]=e_1$&&&$\mu\rho>\nu^2$\\
			  		\hline
			  		$[e_3,e_4]=ae_1+be_2-2e_4,\;[e_3,e_1]=e_1,$	&$\al e^{12}$&$\mathrm{Diag}\left(1,1,\left(\begin{matrix}
			  		\mu&\nu\\\nu&\rho
			  		\end{matrix}\right)\right)$&$\al\not=0,\mu,\rho>0$\\
			  		$[e_3,e_2]=-e_2,[e_4,e_1]=e_2$&&&$\mu\rho>\nu^2$\\
			  		\hline
			  		$[e_3,e_4]=ae_1+be_2-2e_3,\;[e_3,e_1]=e_1+x e_2,$	&$\al e^{12}$&$\mathrm{Diag}\left(1,1,\left(\begin{matrix}
			  		\mu&\nu\\\nu&\rho
			  		\end{matrix}\right)\right)$&$\al\not=0,\mu,\rho>0$\\
			  		$[e_3,e_2]=-\frac1xe_1-e_2,[e_4,e_1]=xe_2,[e_4,e_2]=\frac1xe_1$&&&$\mu\rho>\nu^2,x\not=0$\\
			  		\hline
			  		$[e_3,e_4]=ae_1+be_2,\;[e_3,e_2]=xe_1+ye_4,$	&$\al e^{12}$&$\mathrm{I}_4$&$\al y\not=0$\\
			  		$[e_4,e_2]=ze_1-ye_3$&&&\\
			  		\hline	
			  	\end{tabular}
			  	\captionof{table}{\label{2}Four dimensional Riemann-Poisson Lie algebras of rank 2}
			  \end{center}		
			 
			 \begin{center}
			\begin{tabular}{|l|l|l|l|l|}
				\hline
				Non vanishing Lie brackets& Bivector $r$&Matrix of $\va$&Conditions&$\mathrm{Der}^s(\h)$\\
				\hline
				$[e_1,e_2]=e_2,$  &$\al e^{12}+\be e^{34}$ &$\mathrm{Diag}(a,b,c,d)$ &$\al\be\not=0$&$\{E_{21},E_{33}-E_{44},E_{43},E_{34}   \}$\\
				  & &  &$a,b,c,d>0$&\\
				\hline 
				$[e_1,e_2]=-e_3,[e_1,e_3]=e_2,$ &$\al e^{14}+\be e^{23}$ &$\mathrm{Diag}(a,b,b,c)$ &$\al\be\not=0$&$\{E_{23}-E_{32},E_{41}   \}$\\
				 & &  &$a,b,c>0$&\\
				\hline
				$[e_1,e_2]=e_2,[e_3,e_4]=e_4,$  &$\al e^{12}+\be e^{34}$ &$\mathrm{Diag}(a,b,c,d)$ &$\al\be\not=0$&$\{E_{21},E_{43} \}$\\
				  & &  &$a,b,c,d>0$&\\
				\hline
				$[e_4,e_1]=e_1,[e_4,e_2]=-\de e_3,$ &$\al e^{14}+\be e^{23}$ &$\mathrm{Diag}(a,b,b,c)$ &$\al\be\not=0,\de>0$&$\{E_{14},E_{23}-E_{32} \}$\\
				$[e_4,e_3]=\de e_2$ & &  &$a,b,c>0$&\\
				\hline
				$[e_1,e_2]=e_3,[e_4,e_3]= e_3,$ &$\al (e^{12}- e^{34})$ &$\mathrm{Diag}(a,\mu b,\mu a,b)$ &$\al\not=0$&$\{E_{34},E_{22}-E_{11},E_{12}+E_{21} \}$\\
				$[e_4,e_1]=\frac12 e_1,[e_4,e_2]=\frac12e_2,,$ & &  &$a,b,\mu>0$&\\
				\hline
				$[e_1,e_2]=e_3,[e_4,e_3]= e_3,$ &$\al (e^{23}+ e^{14})$ &$\mathrm{Diag}(a,a,2 a,2a)$ &$\al\not=0$&$\{2E_{14}-E_{32} \}$\\
				$[e_4,e_1]=2 e_1,[e_4,e_2]=-e_2,$ & &  &$a>0$&\\
				\hline
				$[e_1,e_2]=e_3,[e_4,e_3]= e_3,$ &$\al (e^{12}- e^{34})$ &$\mathrm{Diag}(a,a, a,a)$ &$\al\not=0$&$\{E_{34},E_{12}-E_{21} \}$\\
				$[e_4,e_1]=\frac12 e_1-e_2,$ & &  &$a>0$&\\
				$[e_4,e_2]=e_1+\frac12e_2,$&&&&\\
				\hline
			\end{tabular}		
			\captionof{table}{\label{3}Four-dimensional K\"ahler Lie algebras and their symplectic derivations}\end{center}
		$\bullet$ $\dim\G=5$. We have:
		\begin{enumerate}
			\item[$(c51)$] $\dim\h=4$ and $\h$ abelian and hence a symplectic vector space. We can apply Proposition \ref{p1} and $\G$ is semi-direct product.
			\item[$(c52)$] $\dim\h=4$ and $\h$ non abelian. We can apply Proposition \ref{p1} and Table \ref{3} to get the Lie brackets on $\G$, $\va$ and $r$. The result is summarized in Table \ref{4}.
			\item[$(c53)$] $\dim\h=2$ and $\h$ non abelian. We  apply Proposition \ref{nab}. In this case $(\p,\br_\p,\va_\p)$ is a 3-dimensional Euclidean Lie algebra and one must compute $\mathrm{Der}(\p)\cap\mathrm{so}(\p)$ and solve \eqref{eqh}. Three dimensional Euclidean Lie algebras were classified in \cite{lee}. For each of them we have computed $\mathrm{Der}(\p)\cap\mathrm{so}(\p)$ and solved \eqref{eqh} by using Maple. The result is summarized in Table \ref{5} when $\p$ is unimodular and Table \ref{6} when $\p$ is nonunimodular.
			\item[$(c54)$] $\dim\h=2$ and $\h$  abelian and $\phi_\h=0$. We  apply Proposition \ref{ab3} and we perform all the needed computations. We use the classification of 3-dimensional Euclidean Lie algebras given in \cite{lee}. The results are given in Tables \ref{7}-\ref{8}. 
			\item[$(c55)$] $\dim\h=2$ and $\h$  abelian and $\phi_\h\not=0$. We  apply Proposition \ref{ab3b} and we perform all the needed computations.  The results are given in Table \ref{9}.
		\end{enumerate}
		
		\begin{center}	
		\begin{tabular}{|l|l|l|l|}
		\hline
		Non vanishing Lie brackets& Bivector $r$&Matrix of $\va$&Conditions\\
		\hline
		$[e_1,e_2]=e_2,[e_5,e_1]=xe_2,$  &$\al e^{12}+\be e^{34}$ &$\mathrm{Diag}(a,b,c,d,e)$ &$\al\be\not=0$\\
		$[e_5,e_3]=ye_3+te_4,[e_5,e_4]=ze_3-ye_4$  & &  &$a,b,c,d,e>0$\\
		 \hline 
		$[e_1,e_2]=-e_3,[e_1,e_3]=e_2,$ &$\al e^{14}+\be e^{23}$ &$\mathrm{Diag}(a,b,b,c,d)$ &$\al\be\not=0$\\
		$[e_5,e_1]=ye_4,[e_5,e_2]=-xe_3,[e_5,e_3]=xe_2$ & &  &$a,b,c,d>0$\\
		 \hline
		 $[e_1,e_2]=e_2,[e_3,e_4]=e_4,$  &$\al e^{12}+\be e^{34}$ &$\mathrm{Diag}(a,b,c,d,e)$ &$\al\be\not=0$\\
		 $[e_5,e_1]=xe_2,[e_5,e_3]=ye_4$  & &  &$a,b,c,d,e>0$\\
		 \hline
		 $[e_4,e_1]=e_1,[e_4,e_2]=-\de e_3,[e_4,e_3]=\de e_2$ &$\al e^{14}+\be e^{23}$ &$\mathrm{Diag}(a,b,b,c,d)$ &$\al\be\not=0,\de>0$\\
		 $[e_5,e_2]=-ye_3,[e_5,e_3]=ye_2,[e_5,e_4]=xe_1$ & &  &$a,b,c,d>0$\\
		 \hline
		 $[e_1,e_2]=e_3,[e_4,e_3]= e_3,[e_4,e_1]=\frac12 e_1$ &$\al (e^{12}- e^{34})$ &$\mathrm{Diag}(a,\mu b,\mu a,b,c)$ &$\al\not=0$\\
		 $[e_4,e_2]=\frac12e_2,[e_5,e_1]=xe_1+ye_2,$ & &  &$a,b,c,\mu>0$\\
		 $[e_5,e_2]=ye_1-xe_2,[e_5,e_4]=ze_3$&&&\\
		 \hline
		 $[e_1,e_2]=e_3,[e_4,e_3]= e_3,[e_4,e_1]=2 e_1$ &$\al (e^{23}+ e^{14})$ &$\mathrm{Diag}(a,a,2 a,2a,b)$ &$\al\not=0$\\
		 $[e_4,e_2]=-e_2,[e_5,e_2]=xe_3,[e_5,e_4]=-2xe_1$ & &  &$a,b>0$\\
		 \hline
		 $[e_1,e_2]=e_3,[e_4,e_3]= e_3,[e_4,e_1]=\frac12 e_1-e_2$ &$\al (e^{12}- e^{34})$ &$\mathrm{Diag}(a,a, a,a,b)$ &$\al\not=0$\\
		 $[e_4,e_2]=e_1+\frac12e_2,[e_5,e_1]=-xe_2,[e_5,e_2]=xe_1$ & &  &$a,b>0$\\
		 $[e_5,e_4]=ye_3$&&&\\
		 \hline
		\end{tabular}		
		\captionof{table}{\label{4}Five-dimensional Riemann-Poisson Lie algebras of rank 4}\end{center}

		\begin{center}	
			\begin{tabular}{|l|l|l|l|}
				\hline
				Non vanishing Lie brackets& Bivector $r$&Matrix of $\va$&Conditions\\
				\hline
			$[e_1,e_2]= e_1,[e_3,e_2]=b\mu e_1-ce_4,[e_4,e_2]=d\mu e_1+ce_3$&$\al e^{12}$&$\mathrm{Diag}(1,\rho,\mu,\mu,1)$&$c\al\not=0$\\
			$[e_5,e_2]=fe_1,[e_3,e_4]=-fe_1+e_5$&&&$\mu,\rho>0$\\
			\hline
			$[e_1,e_2]= e_1,[e_3,e_2]=be_1,[e_4,e_2]=ce_1$&$\al e^{12}$&$\mathrm{Diag}(1,\rho,1,1,\mu)$&$\al\not=0$\\
			$[e_5,e_2]=d\mu e_1,[e_3,e_5]=b e_1-e_3,[e_4,e_5]=-ce_1+e_4$&&&$\mu,\rho>0$\\	
			\hline
			$[e_1,e_2]= e_1,[e_3,e_2]=(b+c)e_1,[e_4,e_2]=(cx+b)e_1$&$\al e^{12}$&$\mathrm{Diag}(1,\rho,\left(\begin{matrix}
			1&1\\1&x
			\end{matrix} \right),\mu)$&$\al\not=0$\\
			$[e_5,e_2]=d\mu e_1,[e_3,e_5]=(b+c) e_1-e_3,$&&&$\mu,\rho>0$\\
			$[e_4,e_5]=-(xc+b)e_1+e_4$&&&\\	
			\hline	
			$[e_1,e_2]= e_1,[e_3,e_2]=be_1,[e_4,e_2]=c\mu e_1$&$\al e^{12}$&$\mathrm{Diag}(1,\rho,1,\mu,\nu)$&$\al\not=0$\\
			$[e_5,e_2]=d\nu e_1,[e_3,e_5]=-\mu c e_1+e_4,[e_4,e_5]=be_1-e_3$&&&$\mu,\nu,\rho>0$\\	
			\hline	
			$[e_1,e_2]= e_1,[e_3,e_2]=b\mu e_1,[e_4,e_2]=c\nu e_1$&$\al e^{12}$&$\mathrm{Diag}(1,\xi,\mu,\nu,\rho)$&$\al\not=0$\\
			$[e_5,e_2]=d\rho e_1,[e_3,e_4]=-2\rho de_1+2e_5,$&&&$\mu,\nu,\rho,\xi>0$\\
			$[e_3,e_5]=2\nu c e_1-2e_4,[e_4,e_5]=2\mu be_1-2e_3$&&&$\mu\not=\nu,\mu\not=\rho,\nu\not=\rho$\\
			\hline	
			$[e_1,e_2]= e_1,[e_3,e_2]=b\mu e_1,[e_4,e_2]=c\nu e_1-\la e_5$&$\al e^{12}$&$\mathrm{Diag}(1,\rho,\mu,\nu,\nu)$&$\la\al\not=0$\\
			$[e_5,e_2]=d\nu e_1+\la e_4,[e_3,e_4]=-\frac{2\nu(\la c+d)}{1+\la^2} e_1+2e_5,$&&&$\mu,\nu,\rho>0$\\
			$[e_3,e_5]=\frac{2\nu(c-\la d)}{1+\la^2} e_1-2e_4,[e_4,e_5]=2\mu be_1-2e_3$&&&\\
			\hline
			$[e_1,e_2]= e_1,[e_3,e_2]=b\mu e_1,[e_4,e_2]=c\nu e_1$&$\al e^{12}$&$\mathrm{Diag}(1,\xi,\mu,\nu,\rho)$&$\al\not=0$\\
			$[e_5,e_2]=d\rho e_1,[e_3,e_4]=-\rho de_1+e_5,$&&&$\mu,\nu,\rho,\xi>0$\\
			$[e_3,e_5]=\nu c e_1-e_4,[e_4,e_5]=-\mu be_1+e_3$&&&$\mu\not=\nu,\mu\not=\rho,\nu\not=\rho$\\
			\hline
			$[e_1,e_2]= e_1,[e_3,e_2]=b\mu e_1,[e_4,e_2]=c\nu e_1-\la e_5$&$\al e^{12}$&$\mathrm{Diag}(1,\rho,\mu,\nu,\nu)$&$\la\al\not=0$\\
			$[e_5,e_2]=d\nu e_1+\la e_4,[e_3,e_4]=-\frac{\nu(\la c+d)}{1+\la^2} e_1+e_5,$&&&$\mu,\nu,\rho>0$\\
			$[e_3,e_5]=\frac{\nu(c-\la d)}{1+\la^2} e_1-e_4,[e_4,e_5]=-\mu be_1+e_3$&&&\\
			\hline	
			$[e_1,e_2]= e_1,[e_3,e_2]=b\mu e_1-ue_4-ve_5,$&$\al e^{12}$&$\mathrm{Diag}(1,\rho,\mu,\mu,\mu)$&$\al\not=0$\\
			$[e_4,e_2]=c\mu e_1+ue_3-w e_5,[e_5,e_2]=d\mu e_1+v e_3+we_4,$&&&$\mu,\rho>0$\\
			$[e_3,e_4]=x e_1+e_5,[e_3,e_5]=y e_1-e_4,[e_4,e_5]=ze_1+e_3$&&&\\
			$x=-\frac{\mu(buw-cuv+du^2+bv+cw+d)}{1+u^2+v^2+w^2}$&&&\\
			$y=\frac{\mu(-bvw+cv^2-duw+bu-dw+c)}{1+u^2+v^2+w^2}$&&&\\
			$z=-\frac{\mu(bw^2-cvw+duw-cu-dv+b)}{1+u^2+v^2+w^2}$&&&\\
			\hline	
	\end{tabular}		
	\captionof{table}{\label{5}Five-dimensional Riemann-Poisson Lie algebras of rank 2 with non abelian K\"ahler subalgebra and  unimodular complement}\end{center}

{\small
\begin{center}	
	\begin{tabular}{|l|l|l|l|}
		\hline
		Non vanishing Lie brackets& Bivector $r$&Matrix of $\va$&Conditions\\
		\hline
		$[e_1,e_2]= e_1,[e_3,e_2]=(f+c\la+f\la^2) e_1-\la e_4,$&$\al e^{12}$&$\mathrm{Diag}(1,\rho,1,1,\mu)$&$\la\al\not=0$\\
		$[e_4,e_2]=c e_1+\la e_3,[e_5,e_2]=d\mu e_1,[e_3,e_5]=fe_1-e_3,$&&&$\mu,\rho>0$\\
		$[e_4,e_5]=(\la f+c)e_1-e_4$&&&\\
		\hline
		$[e_1,e_2]= e_1,[e_3,e_2]=b e_1,[e_4,e_2]=c\mu e_1,$&$\al e^{12}$&$\mathrm{Diag}(1,\rho,1,\mu,\nu)$&$\al\not=0, f=1\;$or \\
		$[e_5,e_2]=d\nu e_1,[e_3,e_5]=\mu ce_1-e_4,$&&&$f\leq0,0<\mu<|f|,\rho>0$\\
		$[e_4,e_5]=(-fb+2\mu c)e_1+fe_3-2e_4$&&&\\
		\hline
		$[e_1,e_2]= e_1,[e_3,e_2]=(b+c\mu) e_1,[e_4,e_2]=(c+b\mu) e_1,$&$\al e^{12}$&$\mathrm{Diag}(1,\rho,\left(\begin{matrix}
		1&\mu\\\mu&1
		\end{matrix}  \right),\nu)$&$\al\not=0, $ \\
		$[e_5,e_2]=d\nu e_1,[e_3,e_5]=(\mu b+c) e_1-e_4,$&&&$\mu,\nu,\rho>0$\\
		$[e_4,e_5]=((2-\mu)c+(2\mu-1)b)e_1+e_3-2e_4$&&&\\
		\hline
		$[e_1,e_2]= e_1,[e_3,e_2]=(b+c) e_1,[e_4,e_2]=(b+c\mu) e_1,$&$\al e^{12}$&$\mathrm{Diag}(1,\rho,\left(\begin{matrix}
		1&1\\1&\mu
		\end{matrix}  \right),\nu)$&$\al\not=0, $ \\
		$[e_5,e_2]=d\nu e_1,[e_3,e_5]=( b+c\mu) e_1-e_4,$&&&$\nu,\rho>0,c>\mu>1$\\
		$[e_4,e_5]=((2-f)b+(2\mu-f)c)e_1+fe_3-2e_4$&&&\\
		\hline
		$[e_1,e_2]= e_1,[e_3,e_2]=(b+\frac12c) e_1,[e_4,e_2]=(c+\frac12b) e_1,$&$\al e^{12}$&$\mathrm{Diag}(1,\rho,\left(\begin{matrix}
		1&\frac12\\\frac12&1
		\end{matrix}  \right),\nu)$&$\al\not=0, $ \\
		$[e_5,e_2]=d\nu e_1,[e_3,e_5]=(c+\frac12b)e_1-e_4,$&&&$\rho,\nu>0$\\
		$[e_4,e_5]=(b+2 c)e_1-2e_4$&&&\\
		\hline
		$[e_1,e_2]= e_1,[e_3,e_2]=x e_1,[e_4,e_2]=y e_1,$&$\al e^{12}$&$A^tBA$&$\al\not=0, $ \\
		$[e_5,e_2]=d\nu e_1,[e_3,e_5]=ze_1-e_4,$&&$A=\left(  \begin{matrix}
		\frac{1+s}{-2fs}&-\frac{1}{2s}&0\\\frac{1-s}{2fs}&\frac{1}{2s}\\0&0&1
		\end{matrix}\right)$&$0<f<1,0\leq\mu<1,\nu,\rho>0$\\
		$[e_4,e_5]=te_1+fe_3-2e_4$&&$B=\mathrm{Diag}(1,\rho,\left(\begin{matrix}
		1&\mu\\\mu&1
		\end{matrix}  \right),\nu)$&\\
		$x=\frac{((\mu+1)b+(\mu-1)c)f-2b}{2f^2(f-1)},y=z=\frac{(\mu-1)(cf+b)}{2f(f-1)}$&&&\\
		$t=\frac{(1-\mu)cf+((f-2)\mu+f)b}{2f(1-f)}$&&$s=\sqrt{1-f}$&\\
		\hline
\end{tabular}		
\captionof{table}{\label{6}Five-dimensional Riemann-Poisson Lie algebras of rank 2 with non abelian K\"ahler subalgebra and non unimodular complement}\end{center}}

\begin{center}	
	\begin{tabular}{|l|l|l|l|}
		\hline
		Non vanishing Lie brackets& Bivector $r$&Matrix of $\va$&Conditions\\
		\hline
		$[e_3,e_4]=ae_1+be_2+e_5,[e_3,e_5]=ce_1+de_2$&$\al e^{12}$&$\mathrm{Diag}(1,1,\mu,\mu,1)$&$\al\not=0$\\
		$[e_4,e_5]=fe_1+ge_2$&&&$\mu>0$\\
		\hline
	$[e_3,e_4]=ae_1+be_2,[e_3,e_5]=ce_1+de_2-e_3$&$\al e^{12}$&$\mathrm{Diag}(1,1,1,1,\mu)$&$\al\not=0$\\
	$[e_4,e_5]=fe_1+ge_2+e_4$&&$\mathrm{Diag}(1,1,\left(\begin{matrix}
	1&1\\1&x
	\end{matrix} \right),\mu)$&$\mu>0$\\
	\hline
	$[e_3,e_4]=ae_1+be_2,[e_3,e_5]=ce_1+de_2+e_4$&$\al e^{12}$&$\mathrm{Diag}(1,1,1,\mu,\nu)$&$\al\not=0$\\
	$[e_4,e_5]=fe_1+ge_2-e_3$&&&$\mu,\nu>0$\\
	\hline
	$[e_3,e_4]=ae_1+be_2+2e_5,[e_3,e_5]=ce_1+de_2-2e_4$&$\al e^{12}$&$\mathrm{Diag}(1,1,\mu,\nu,\rho)$&$\al\not=0$\\
	$[e_4,e_5]=fe_1+ge_2-2e_3$&&&$\mu,\nu,\rho>0$\\
	\hline
	$[e_3,e_4]=ae_1+be_2+e_5,[e_3,e_5]=ce_1+de_2-e_4$&$\al e^{12}$&$\mathrm{Diag}(1,1,\mu,\nu,\rho)$&$\al\not=0$\\
	$[e_4,e_5]=fe_1+ge_2+e_3$&&&$\mu,\nu,\rho>0$\\
	\hline
	$[e_3,e_5]=ce_1+de_2-e_3$&$\al e^{12}$&$\mathrm{Diag}(1,1,1,1,\mu)$&$\al\not=0$\\
	$[e_4,e_5]=fe_1+ge_2-e_4$&&&$\mu>0$\\
	\hline
	$[e_3,e_5]=ce_1+de_2-e_4$&$\al e^{12}$&{ There are many cases}&$\al\not=0$\\
	$[e_4,e_5]=fe_1+ge_2+xe_3-2e_4$&&See \cite{lee}&\\
	\hline
\end{tabular}		
\captionof{table}{\label{7}Five-dimensional Riemann-Poisson Lie algebras of rank 2 with  abelian K\"ahler subalgebra}\end{center}
	{\small	
\begin{center}	
	\begin{tabular}{|l|l|l|l|}
		\hline
		Non vanishing Lie brackets& Bivector $r$&Matrix of $\va$&Conditions\\
		\hline
		$[e_3,e_1]=-e_2,[e_3,e_2]=e_1,[e_4,e_1]=e_2,[e_4,e_2]=e_1$&$\al e^{12}$&$\mathrm{Diag}(1,1,\mu,\nu,\rho)$&$\al\not=0$\\
		$[e_5,e_1]=e_1,[e_5,e_2]=-e_2,$&&&$\mu,\nu,\rho>0$\\
		$[e_3,e_4]=2e_5+(l_{22}-l_{21}-2l_{13})e_1-(l_{12}+l_{11}+2l_{23})e_2$&&&\\
		$[e_3,e_5]=-2e_4+(l_{23}-l_{11}+2l_{12})e_1-(l_{13}-l_{21}-2l_{22})e_2,$&&&\\
		$[e_4,e_5]=-2e_3+(l_{23}-l_{12}+2l_{11})e_1+(l_{13}+l_{22}+2l_{21})e_2$&&&\\
		\hline	
		$[e_4,e_2]=ue_1,[e_5,e_1]=-\frac{a}2e_1,[e_5,e_2]=ve_1+\frac{a}2e_2,$&$\al e^{12}$&$\mathrm{Diag}(1,1,1,1,1)$&$\al\not=0$\\
		$[e_3,e_4]=xe_1+ye_2,[e_3,e_5]=be_3+ze_1+te_2,$&&&$a\not=0,b\not=0$\\
		$[e_4,e_5]=ce_3+ae_4+re_1+se_2,$&&&$(3a+2b)y=0$\\
		&&&$(a+2b)x-2tu+2yv=0$\\
		\hline
		$[e_4,e_2]=ue_1,[e_5,e_1]=-\frac{a}2e_1,[e_5,e_2]=ve_1+\frac{a}2e_2,$&$\al e^{12}$&$\mathrm{Diag}(1,1,\left(\begin{matrix}
		1&\mu\\\mu&1
		\end{matrix}
		\right),1)$&$\al\not=0$\\
		$[e_3,e_4]=xe_1,[e_3,e_5]=ze_1+te_2,$&&&$a\not=0,$\\
		$[e_4,e_5]=ae_4+re_1+se_2,$&&&$ax-2tu=0$\\
		\hline
		$[e_4,e_1]=ue_2,[e_5,e_1]=\frac{a}2e_1+ve_2,[e_5,e_2]=-\frac{a}2e_2,$&$\al e^{12}$&$\mathrm{Diag}(1,1,1,1,1)$&$\al\not=0$\\
		$[e_3,e_4]=xe_1+ye_2,[e_3,e_5]=be_3+ze_1+te_2,$&&&$a\not=0,b\not=0$\\
		$[e_4,e_5]=ce_3+ae_4+re_1+se_2,$&&&$(3a+2b)x=0$\\
		&&&$(a+2b)y-2zu+2xv=0$\\
		\hline
		$[e_4,e_1]=ue_2,[e_5,e_1]=\frac{a}2e_1+ve_2,[e_5,e_2]=-\frac{a}2e_2,$&$\al e^{12}$&$\mathrm{Diag}(1,1,\left(\begin{matrix}
		1&\mu\\\mu&1
		\end{matrix}
		\right),1)$&$\al\not=0$\\
		$[e_3,e_4]=ye_2,[e_3,e_5]=ze_1+te_2,$&&&$a\not=0,$\\
		$[e_4,e_5]=ae_4+re_1+se_2,$&&&$ay-2zu=0$\\
		\hline
		$[e_4,e_1]=ue_1+upe_2,[e_4,e_2]=-\frac{u}pe_1-ue_2,,$&$\al e^{12}$&$\mathrm{Diag}(1,1,1,1,1)$&$\al\not=0$\\
		$[e_5,e_1]=ve_1+\frac{(2v-a)p}2e_2,[e_5,e_2]=-\frac{(2v+a)}{2p}e_1-ve_2$&&&\\
		$[e_3,e_4]=xe_1+ye_2,[e_3,e_5]=be_3+ze_1+te_2,$&&&$a\not=0,b\not=0$\\
		$[e_4,e_5]=ce_3+ae_4+re_1+se_2,$&&&\\
		&&&\\$((2a+2b+2v)x-2zu)p-ay+2tu-2yv=0$&&&\\
		$(2xv-ax-2zu)p+(2a+2b-2v)y+2tu=0$&&&\\
		\hline
		$[e_4,e_1]=ue_1+upe_2,[e_4,e_2]=-\frac{u}pe_1-ue_2,,$&$\al e^{12}$&$\mathrm{Diag}(1,1,\left(\begin{matrix}
		1&\mu\\\mu&1
		\end{matrix}
		\right),1)$&$\al\not=0$\\
		$[e_5,e_1]=ve_1+\frac{(2v-a)p}2e_2,[e_5,e_2]=-\frac{(2v+a)}{2p}e_1-ve_2$&&&\\
		$[e_3,e_4]=xe_1+ye_2,[e_3,e_5]=ze_1+te_2,$&&&$a\not=0,b\not=0$\\
		$[e_4,e_5]=ae_4+re_1+se_2,$&&&\\
		&&&\\$((2a+2v)x-2zu)p-ay+2tu-2yv=0$&&&\\
		$(2xv-ax-2zu)p+(2a-2v)y+2tu=0$&&&\\
		\hline
		$[e_5,e_1]=ue_1+ve_2,[e_5,e_2]=we_1-ue_2,$&$\al e^{12}$&$\mathrm{Diag}(1,1,1,1,1)$&$\al\not=0$\\
		$[e_3,e_4]=xe_1+ye_2,[e_3,e_5]=ae_3+be_4+ze_1+te_2,$&&&$(a+d+u)x+yw=0$\\
		$[e_4,e_5]=ce_3+de_4+re_1+se_2,$&&&$xv+(a+d-u)y=0$\\
		\hline
		$[e_5,e_1]=ue_1+ve_2,[e_5,e_2]=we_1-ue_2,$&$\al e^{12}$&$\mathrm{Diag}(1,1,1,1,1)$&$\al\not=0$\\
		$[e_3,e_4]=xe_1+ye_2+ae_4,[e_3,e_5]=be_4+ze_1+te_2,$&&&$a\not=0$\\
		$[e_4,e_5]=ce_4+re_1+se_2,$&&&$(c+u)x-ar+yw=0$\\
		&&&$(c-u)y-as+xv=0$\\
		\hline
\end{tabular}		
\captionof{table}{\label{8}Five-dimensional Riemann-Poisson Lie algebras of rank 2 with  abelian K\"ahler subalgebra (Continued)}\end{center}}

\begin{center}	
	\begin{tabular}{|l|l|l|l|}
		\hline
		Non vanishing Lie brackets& Bivector $r$&Matrix of $\va$&Conditions\\
		\hline
		$[e_3,e_2]=xe_1-ae_4,[e_4,e_2]=ye_1+ae_3,[e_5,e_2]=ze_1$&$\al e^{12}$&$\mathrm{Diag}(1,1,1,1,1)$&$\al\not=0$\\
		$[e_3,e_5]=pe_3+qe_4+a^{-1}(-qx+py)e_1,$&&&$a\not=0$\\
		$[e_3,e_5]=-qe_3+pe_4-a^{-1}(px+qy)e_1$&&&\\
		\hline
		$[e_3,e_2]=xe_1-ae_4,[e_4,e_2]=ye_1+ae_3,[e_5,e_2]=ze_1$&$\al e^{12}$&$\mathrm{Diag}(1,1,1,1,1)$&$\al\not=0$\\
		$[e_3,e_4]=be_1$&&&\\
		$[e_3,e_5]=qe_4-a^{-1}qxe_1,$&&&$a\not=0,z\not=0$\\
		$[e_3,e_5]=-qe_3-a^{-1}qye_1$&&&\\
		\hline
		$[e_3,e_2]=xe_1-ae_4,[e_4,e_2]=ye_1+ae_3,$&$\al e^{12}$&$\mathrm{Diag}(1,1,1,1,1)$&$\al\not=0$\\
		$[e_3,e_4]=be_1+ce_2$&&&\\
		$[e_3,e_5]=qe_4-a^{-1}qxe_1,$&&&$a\not=0$\\
		$[e_3,e_5]=-qe_3-a^{-1}qye_1$&&&\\
		\hline				
\end{tabular}		
\captionof{table}{\label{9}Five-dimensional Riemann-Poisson Lie algebras of rank 2 with  abelian K\"ahler subalgebra (Continued)}\end{center}
	
This theorem unknown to our knowledge can be used to build examples of Riemann-Poisson Lie algebras.

\begin{thm} Let $(G,\prs)$ be an even dimensional flat Riemannian Lie group. Then there exists a left invariant differential $\Om$ on $G$ such that $(G,\prs,\Om)$ is a K\"ahler Lie group.
	
\end{thm}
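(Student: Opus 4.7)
The plan is to invoke Milnor's structure theorem for flat left-invariant Riemannian metrics: if $(G,\prs)$ is flat and we set $\g=T_eG$ with $\va=\prs_e$, then there is a $\va$-orthogonal decomposition $\g=\mathfrak{u}\oplus\mathfrak{b}$ in which $\mathfrak{u}$ is an abelian ideal, $\mathfrak{b}$ is an abelian subalgebra, and every $\ad_b$, $b\in\mathfrak{b}$, is $\va$-skew-symmetric. Plugging this into the Koszul formula \eqref{lp} gives a short computation showing that the Levi-Civita product $A$ of $(\g,\br,\va)$ satisfies $A_u=0$ for $u\in\mathfrak{u}$, $A_b|_{\mathfrak{b}}=0$ and $A_b|_{\mathfrak{u}}=\ad_b|_{\mathfrak{u}}$. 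In particular every $A_x$ is $\va$-skew-symmetric. By \eqref{ka}, it is enough to produce a nondegenerate $\om\in\wedge^2\g^*$ with respect to which each $\ad_b$ is skew-symmetric, since then $A_x$ will automatically be $\om$-skew-symmetric too.

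To build $\om$, I would decompose $\mathfrak{u}=\mathfrak{u}_0\oplus\mathfrak{u}_1$, where $\mathfrak{u}_0=\bigcap_{b\in\mathfrak{b}}\ker(\ad_b|_\mathfrak{u})$ and $\mathfrak{u}_1$ is its $\va$-orthogonal complement in $\mathfrak{u}$; both summands are $\mathfrak{b}$-invariant because $\ad_b|_\mathfrak{u}$ is skew-symmetric. The commuting family $\{\ad_b|_\mathfrak{u}\}_{b\in\mathfrak{b}}$ of skew-symmetric operators admits a simultaneous block-diagonalization of $\mathfrak{u}_1$ into $\va$-orthogonal $2$-planes $V_i$ on each of which every $\ad_b$ is of the form $c_i(b)\,J_i$, with $J_i$ a $\va$-orthogonal complex structure ($J_i^2=-\mathrm{Id}_{V_i}$ and $J_i$ $\va$-skew). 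In particular $\dim\mathfrak{u}_1$ is even, and since $\dim\g$ is even by hypothesis, $\dim(\mathfrak{b}\oplus\mathfrak{u}_0)$ is also even.

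Now pick any nondegenerate $\om_0\in\wedge^2(\mathfrak{b}\oplus\mathfrak{u}_0)^*$ (which exists by the parity just noted), define $\om_1$ on $\mathfrak{u}_1$ by $\om_1(u,v)=\va(J_i u,v)$ whenever $u,v\in V_i$ and $\om_1(V_i,V_j)=0$ for $i\neq j$, and set $\om=\om_0\oplus\om_1$ with $\mathfrak{b}\oplus\mathfrak{u}_0$ and $\mathfrak{u}_1$ declared $\om$-orthogonal. Then $\om$ is nondegenerate, and one checks the identity $\om(\ad_b x,y)+\om(x,\ad_b y)=0$ for $x,y$ in any pair of summands: whenever one of $x,y$ lies in $\mathfrak{b}\oplus\mathfrak{u}_0$ the left-hand side reduces either to $0$ (since $\ad_b$ kills $\mathfrak{b}\oplus\mathfrak{u}_0$) or to a pairing $\om(\mathfrak{b}\oplus\mathfrak{u}_0,\mathfrak{u}_1)$ which vanishes by the chosen orthogonal splitting; and within a single block $V_i$ the identity boils down to $\om_1(J_i u,v)+\om_1(u,J_i v)=0$, which is $-\va(u,v)+\va(u,v)=0$ using $J_i^2=-\mathrm{Id}$ together with $\va$-orthogonality of $J_i$. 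Extending $\om$ to a left invariant $2$-form $\Om$ on $G$ then gives the required K\"ahler structure.

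The main obstacle is essentially the parity bookkeeping: one must verify that $\dim(\mathfrak{b}\oplus\mathfrak{u}_0)$ is even so that a symplectic form exists on the ``flat'' part of $\g$, and one must ensure that the complex structures $J_i$ chosen block-by-block combine into a globally well-defined $\om_1$ that is simultaneously invariant under every $\ad_b$. Both issues are handled cleanly by the simultaneous block-diagonalization of the commuting family $\{\ad_b|_\mathfrak{u}\}$; once that decomposition is in place, the remainder of the argument reduces to a direct inspection of the Kähler identity across the summands $\mathfrak{b}\oplus\mathfrak{u}_0,\,V_1,\,V_2,\ldots$.
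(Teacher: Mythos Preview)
Your proposal is correct and follows essentially the same line as the paper's proof: invoke Milnor's structure theorem for flat left-invariant metrics, simultaneously block-diagonalize the commuting skew-symmetric operators $\ad_b|_{\mathfrak u}$ into $2$-planes, and take $\om$ to be the direct sum of an arbitrary symplectic form on the ``trivial'' piece with the natural $2$-form $\sum e_i^*\wedge f_i^*$ on the block part. The only cosmetic difference is that the paper quotes an improved version of Milnor's theorem asserting directly that $[\g,\g]$ and $[\g,\g]^\perp$ are each even-dimensional abelian (so your $\mathfrak u_0$ is already absorbed into $[\g,\g]^\perp$), whereas you recover this parity by hand from the block decomposition; your splitting $(\mathfrak b\oplus\mathfrak u_0)\oplus\mathfrak u_1$ coincides with the paper's $[\g,\g]^\perp\oplus[\g,\g]$.
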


\begin{proof} Let $\G$ be the Lie algebra of $G$ and $\va=\prs(e)$. According to Milnor's Theorem \cite[Theorem 1.5]{milnor} and its improved version \cite[Theorem 3.1]{ait} the flatness of the metric on $G$ is equivalent to $[\G,\G]$ is even dimensional abelian, $[\G,\G]^\perp=\{u\in\G,\ad_u+\ad_u^*=0 \}$ is also even dimensional abelian and $\G=[\G,\G]\oplus [\G,\G]^\perp$. Moreover, the Levi-Civita product is given by\begin{equation}\label{eq12}\mathrm{L}_a=\left\{\begin{array}{ccc}
	\ad_a&\mbox{if}
	&a\in [\G,\G]^\perp,
	\\ 0&\mbox{if}&a\in [\G,\G]
	\end{array}\right.\end{equation} and there exists a basis $(e_1,f_1,\ldots,e_r,f_r)$ of $[\G,\G]$ and $\la_1,\ldots,\la_r\in[\G,\G]^\perp\setminus\{0\}$ such that for any $a\in[\G,\G]^\perp$,
	\[ [a,e_i]=\la_i(a)f_i\esp [a,f_i]=-\la_i(a)e_i. \]
	We consider a nondegenerate skew-symmetric 2-form $\om_0$ on $[\G,\G]^\perp$ and $\om_1$ the nondegenerate skew-symmetric 2-form on $[\G,\G]^\perp$ given by $\om_1=\sum_{i=1}^r e_i^*\wedge f_i^*$. One can sees easily that $\om=\om_0\oplus\om_1$ is a K\"ahler form on $\G$.
	\end{proof}

\end{document}